\theoremstyle{plain}
\newtheorem{lemma}{Lemma}[section]
\newtheorem{theorem}[lemma]{Theorem}
\newtheorem{proposition}[lemma]{Proposition}
\theoremstyle{remark}
\def\bb{\begin{color}{black}}
\def\bg{\begin{color}{black}}
\def\br{\begin{color}{black}}
\def\bb{\begin{color}{blue}}
\def\bg{\begin{color}{green}}
\def\br{\begin{color}{red}}
\def\bbr{\begin{color}{brown}}
\def\eg{\end{color}}
\def\er{\end{color}}
\def\eb{\end{color}}
\def\PP{\mathbb{P}}
\def\ve{{\varepsilon}}
\def\le{\leqslant}
\def\pd{\partial}
\def\ge{\geqslant}
\def\es{\emptyset}
\def\E{{\mathbb E}}
\def\O{{\Omega}}
\def\Oxy{{\Omega}^{x,y}}
\def\R{{\mathbb R}}
\def\N{{\mathbb N}}
\def\a{{\alpha}}
\def\b{{\beta}}
\def\d{{\delta}}
\def\t{{\tau}}
\def\k{{\kappa}}
\def\g{{\gamma}}
\def\G{{\Gamma}}
\def\s{{\sigma}}
\def\l{{\lambda}}
\def\L{{\Lambda}}
\def\th{{\theta}}
\def\o{{\omega}}
\def\z{{\zeta}}
\def\cL{{\cal L}}
\def\cE{{\cal E}}
\def\cF{{\cal F}}
\def\cU{{\cal U}}
\def\q{\quad}
\def\dvv{\operatorname{div}}
\def\sym{{\operatorname{sym}}}
\def\skew{{\operatorname{skew}}}
\def\<{\langle}
\def\>{\rangle}
\def\sse{\subseteq}
\def\sm{\setminus}
\renewcommand\epsilon{\ve}
\def\cinf{C^\infty }
\def\oxy{\Omega^{x,y}}
\def\mtxy{\mu^{x,y}_t}
\def\mexy{\mu^{x,y}_\ve}
\def\mex{\mu^x_\ve}
\def\dg{\d_\g}
\def\j{
\def\Ext{\operatorname{Ext}}
\def\C{\operatorname{Cut}}
\def\Hom{\operatorname{Hom}}
\def\Sp{\operatorname{Sp}}
\def\Spin{\operatorname{Spin}}
\def\varpm{\operatorname{pm}}
\def\ord{\operatorname{ord}}
\def\modul{\operatorname{modul}}
\def\sgn{\operatorname{sgn}}
\def\sq{\operatorname{sq}}
\def\supp{\operatorname{supp}}
\def\isom{\operatorname{isom}}
\def\PSL{\operatorname{PSL}}
\def\PIP{\operatorname{PIP}}
\def\PIL{\operatorname{PIL}}
\def\PD{\operatorname{PD}}
\def\PL{\operatorname{PL}}
\def\GL{\operatorname{GL}}
\def\Diff{\operatorname{Diff}}
\def\Out{\operatorname{Out}}
\def\Inn{\operatorname{Inn}}
\def\Aut{\operatorname{Aut}}
\def\End{\operatorname{End}}
\def\Isom{\operatorname{Isom}}
\def\SL{\operatorname{SL}}
\def\id{\operatorname{id}}
\def\Def{\operatorname{Def}}
\def\Trace{\operatorname{Trace}}
\def\trace{\operatorname{trace}}
\def\Tr{\operatorname{Tr}}
\def\Im{\operatorname{Im}}
\def\Re{\operatorname{Re}}
\def\codim{\operatorname{codim}}
\def\Im{\operatorname{Im}}
\def\im{\operatorname{im}}
\def\romancap{\operatorname{cap}}
\def\vol{\operatorname{Vol}}
\def\tan{\operatorname{tan}}
\def\liminf{\operatorname{liminf}}
\def\Lim{\operatorname{Lim}}
\def\mod{\operatorname{mod}}
\def\grad{\operatorname{grad}}
\def\arc{\operatorname{arc}}
\def\loc{\operatorname{loc}}
\def\arctan{\operatorname{arctan}}
\def\cosh{\operatorname{cosh}}
\def\inj{\operatorname{inj}}
\def\deg{\operatorname{deg}}
\def\smear{\operatorname{smear}}
\def\straight{\operatorname{straight}}
\def\support{\operatorname{support}}
\def\represent{\operatorname{represent}}
\def\represents{\operatorname{represents}}
\def\sin{\operatorname{sin}}
\def\id{\operatorname{id}}
\def\volume{\operatorname{Volume}}
\def\wordint{\operatorname{int}}
\def\Div{\operatorname{Div}}
\def\Area{\operatorname{Area}}
\def\area{\operatorname{area}}
\def\diam{\operatorname{diam}}
\def\av{\operatorname{av}}
\def\ex{\operatorname{ex}}
\def\exp{\operatorname{exp}}
\def\curl{\operatorname{curl}}
\def\Curl{\operatorname{Curl}}
\def\Trace{\operatorname{Trace}}
\def\divergence{\operatorname{divergence}}
\def\dim{\operatorname{dim}}
\def\Ker{\operatorname{Ker}}
\def\rank{\operatorname{rank}}
}
\begin{document}

\bibliographystyle{plain}

\begin{center}
\LARGE \textbf{Diffusion in small time in incomplete sub-Riemannian manifolds}

\vspace{0.2in}

\large {\bfseries 
Ismael Bailleul%
\footnote{Univ. Rennes, CNRS, IRMAR - UMR 6625, F-35000 Rennes, France}
\&
James Norris%
\footnote{Statistical Laboratory, Centre for Mathematical Sciences, Wilberforce Road, Cambridge, CB3 0WB, UK}%
\footnote{Research supported by EPSRC grant EP/103372X/1

\noindent
{\it Keywords}: sub-Riemannian manifold, heat kernel, diffusion process.

\noindent
{\it Subject Classifications}: 58J65, 35K08, 60J60.
}}

\vspace{0.2in}
\small \today

\end{center}
\vspace{0.2in}

\begin{abstract}
For incomplete sub-Riemannian manifolds, and for an associated second-order hypoelliptic operator, which need not be symmetric,
we identify two alternative conditions for the validity 
of Gaussian-type upper bounds on heat kernels and transition probabilities,
with optimal constant in the exponent.
Under similar conditions, we obtain the small-time logarithmic asymptotics of the heat kernel,
and show concentration of diffusion bridge measures near a path of minimal energy.
The first condition requires that we consider points whose distance apart is no greater than the sum of their distances to infinity.
The second condition requires only that the operator not be too asymmetric.
\end{abstract}

\vspace{0.2in}

\section{Introduction and summary of results}\label{NPD}
In a sub-Riemannian manifold $M$, there is a natural class of diffusion operators, 
namely those second-order differential operators whose principal symbol is given by the sub-Riemannian structure,
as in equations \eqref{XX} and \eqref{LAB} below. 
The heat flow associated to such an operator $\cL$, with Dirichlet boundary conditions, 
is characterized by its fundamental solution, the heat kernel.
The heat kernel is the transition density of a continuous Markov process $B=(B_t:t\in[0,\z))$ in $M$, 
which is the diffusion process associated to $\cL$.
The following small-time asymptotics are expected in some generality, and are known under additional conditions.
Firstly, the off-diagonal decay of the heat kernel should be given by Varadhan's formula \eqref{VARA}, 
in terms of the sub-Riemannian distance function. 
Secondly, if we start the diffusion process $B$ from $x$ and condition on the event $\{B_t=y,\z>t\}$, 
in a way made precise below,
then the law of the resulting `bridge' process should concentrate for small $t$ around the least energy path from $x$ to $y$.

Although these asymptotics make sense in the general context just described,
it is known by examples of Azencott and Hsu that they are not always valid.
Our aim in this paper, 
is to establish their validity for all symmetric sub-Riemannian diffusions,
and, with suitable natural constraints, also for certain non-symmetric sub-Riemannian diffusions. 
In contrast to prior work, we make no assumption that the underlying sub-Riemannian manifold is complete.
While we assume that the coefficients of our operator $\cL$ are $\cinf$, and hence bounded on compacts, 
the lack of completeness allows all sorts of singular behaviour at $\infty$, 
to which we show the considered asymptotics are robust.

Let $M$ be a connected $\cinf$ manifold of dimension $d$, 
which is equipped with a $\cinf$ sub-Riemannian structure $X_1,\dots,X_m$ and a positive $\cinf$ measure $\nu$.
Thus, $X_1,\dots,X_m$ are $\cinf$ vector fields on $M$ which, taken along with their commutator brackets of all orders, 
span the tangent space at every point, 
and $\nu$ has a positive $\cinf$ density with respect to Lebesgue measure in each coordinate chart.
Consider the symmetric bilinear form $a$ on $T^*M$ given by
\begin{equation}\label{XX}
a(x)=\sum_{\ell=1}^mX_\ell(x)\otimes X_\ell(x).
\end{equation}
Let $\cL$ be a second order differential operator on $M$ with $\cinf$ coefficients, such that $\cL1=0$ and $\cL$ has principal symbol $a/2$.
In each coordinate chart, $\cL$ takes the form\footnote{We 
have written $\cL$ with the factor $1/2$ as is usual in probability. 
With this normalization, the quadratic variation of the associated diffusion process $B$ satisfies
$$
dB^i_tdB^j_t=a^{ij}(B_t)dt
$$
allowing for the possibility in the Riemannian case that $B$ would be the associated Brownian motion.
If the factor is omitted then, by an easy scaling argument,
the asymptotic \eqref{VARA} remains valid but now with $-d(x,y)^2/4$ on the right-hand side.
} 
\begin{equation}\label{LAB}
\cL=\frac12\sum_{i,j=1}^da^{ij}(x)\frac{\pd^2}{\pd x^i\pd x^j}+\sum_{i=1}^db^i(x)\frac{\pd}{\pd x^i}
\end{equation}
for some $\cinf$ functions $b^i$.
It is well known that the Cauchy problem for $\cL$ with Dirichlet boundary conditions has a $\cinf$ fundamental solution $p$
which is a positive function on $(0,\infty)\times M\times M$ acting as a density with respect to $\nu$.
Moreover, there is a diffusion process
$B=(B_t:t\in[0,\z))$ in $M$, with possibly finite random lifetime $\z$, whose transition density
with respect to $\nu$ is given by $p$.
The lifetime $\z$ is characterized by the property that $B_t\to\infty$ (that is to say, leaves all compact sets) as $t\to\z$ on the event $\{\z<\infty\}$.
It is related to the loss of heat at $\infty$ by
$$
\PP_x(\z>t)=\int_Mp(t,x,y)\nu(dy).
$$
Given $\cL$, the law of $B$ does not depend on the choice of $\nu$.

For $x,y\in M$, write $\Oxy$ for the set of continuous paths $\o:[0,1]\to M$ such that $\o_0=x$ and $\o_1=y$.
For $t\in(0,\infty)$, write $\mu_t^{x,y}$ for the probability measure on $\Oxy$ which is the law of the diffusion bridge
obtained by conditioning $(B_{ts}:s\in[0,1])$ on the event $\{B_t=y,\z>t\}$.
We can and do specify this singular conditioning uniquely by requiring that $\mu_t^{x,y}$ is weakly continuous in $y$.

We focus mainly on two problems, each associated with a choice of endpoints $x$ and $y$, and with the limit $t\to0$.
The first is to give conditions for the validity of Varadhan's asymptotics for the heat kernel
\begin{equation}\label{VARA}
t\log p(t,x,y)\to-d(x,y)^2/2
\end{equation}
where $d$ is the sub-Riemannian distance.
The second is to give conditions for the weak limit
\begin{equation}\label{VF}
\mtxy\to\dg
\end{equation}
in the case where there is a unique path $\g$ of minimal energy in $\oxy$.
We wish to understand, in particular, what can be said without symmetry or ellipticity of the operator $\cL$, and without compactness or even completeness of the underlying space $M$.
The heat kernel and the bridge measures have a global dependence on $\cL$, while the limit objects have a more local character, so the limits depend on some localization of diffusion in small time.
We will give two sufficient conditions for this localization, 
the first generalizing from the Riemannian case a criterion of Hsu \cite{MR1089046} and the second requiring a `sector condition' which ensures that the asymmetry in $\cL$ is not too strong.
Hsu's condition \eqref{HSC} is one which may usually be checked from the values of the principal symbol $a$ on some compact set,
while the sector condition places a global restriction on the operator $\cL$.
We will thus give new conditions for the validity of \eqref{VARA} and \eqref{VF}, which do not require completeness, symmetry, ellipticity, or any further condition on the measure $\nu$, nor indeed any sort of curvature condition.
In the case of the sector condition, the limit \eqref{VARA} holds even in the case where every energy-minimizing sequence of paths from $x$ to $y$ leaves all compact sets.
In a companion paper \cite{BMN}, we have investigated further the limit \eqref{VF}, showing the second-order result that the small-time fluctuations of the diffusion bridge around the minimal path $\g$ converge to an explicit Gaussian limit process.

In this section, we state our three asymptotic results. 
In the next, we discuss related prior work.
Later in the paper, we state three further results. 
The first of these, Proposition \ref{DUAL}, shows that the dual characterization for complete sub-Riemannian metrics, proved by
Jerison and Sanchez-Calle \cite{MR922334}, 
extends to the incomplete case.
Then Propositions \ref{HKUB} and \ref{HITEST} give Gaussian-type upper bounds, for heat kernels and hitting probabilities respectively, 
from which the asymptotic results are deduced.

Let $A$ be a closed set in $M$ and set $D=M\sm A$.
Write $p_D$ for the Dirichlet heat kernel of $\cL$ in $D$, extended by $0$ outside $D\times D$.
Define
$$
p(t,x,A,y)=p(t,x,y)-p_D(t,x,y).
$$
Then
$$
p(t,x,A,y)=p(t,x,y)\mu_t^{x,y}\left(\{\o\in\oxy:\o_s\in A\text{ for some }s\in[0,1]\}\right).
$$
We call $p(t,x,A,y)$ the heat kernel through $A$.
In the case where $D$ is relatively compact, we write $p(t,x,A)$ for the hitting probability for $A$, given by%
\footnote{
Note that
$$
p(t,x,A)\ge\int_Mp(t,x,A,y)\nu(dy)\ge\int_Ap(t,x,y)\nu(dy)
$$
and the first inequality is strict if the process explodes, 
while the second inequality is always strict because the process returns to $D$ with positive probability after hitting $A$.
}
$$
p(t,x,A)=\PP_x(T\le t)=1-\int_Dp_D(t,x,y)\nu(dy)
$$
where $T=\inf\{t\in[0,\z):B_t\in A\}$.

The bilinear form $a$ allows us to define a notion of energy $I(\g)$ for paths $\g:[0,1]\to M$.
When $\g$ is absolutely continuous, this is given by
$$
I(\g)=\inf\int_0^1\<\xi_t,a(\g_t)\xi_t\>dt
$$
where the infimum is taken over all measurable paths $\xi:[0,1]\to T^*M$ with $\xi_t\in T^*_{\g_t}M$ for all $t$,
such that, for almost all $t$,
$$
\dot\g_t=a(\g_t)\xi_t.
$$
We will call any such path $\xi$ a driving path for $\g$.
If $\g$ is not absolutely continuous or there is no driving path $\xi$, then we set $I(\g)=\infty$.
The sub-Riemannian distance is then given by
$$
d(x,y)=\inf\{\sqrt{I(\g)}:\g\in\oxy\}.
$$
It is known that $d$ defines a metric on $M$ which is compatible with the topology of $M$.
Set
\begin{align*}
d(x,A)&=\inf\{d(x,z):z\in A\}\\
d(x,A,y)&=\inf\{d(x,z)+d(z,y):z\in A\}.
\end{align*}
Note that
$$
d(x,A)+d(y,A)\le d(x,A,y).
$$
Define%
\footnote{
It is clear that $d(.,\infty)$ is either finite or identically infinite.
By the sub-Riemannian version of the Hopf-Rinow theorem, the second case occurs if and only if $M$ is complete for the sub-Riemannian metric.
Note that the triangle inequality does not apply `at $A$' or `at $\infty$', 
and $d(x,A)$ may exceed $d(x,\infty)$ if $M\sm A$ is not relatively compact.
}
$$
d(x,\infty)=\sup\{d(x,A):\text{$A$ closed and $M\sm A$ relatively compact}\}.
$$

\begin{theorem}\label{HTK}
Suppose that there is a $\cinf$ $1$-form $\b$ on $M$ such that
\begin{equation}\label{CFO}
\cL f=\tfrac12\dvv(a\nabla f)+a(\b,\nabla f)
\end{equation}
where the divergence is understood with respect to $\nu$.
Then, for any closed set $A$ in $M$ and any compact subset $K$ of $M\sm A$, the following upper limits hold uniformly
in $x\in K$ and $y\in K$.
\begin{itemize}
\item[{\rm (a)}]
If $M\sm A$ is relatively compact, then 
\begin{equation}\label{HITS}
\limsup_{t\to0}t\log p(t,x,A)\le-d(x,A)^2/2
\end{equation}
and
\begin{equation}\label{HSU}
\limsup_{t\to0}t\log p(t,x,A,y)\le-(d(x,A)+d(y,A))^2/2.
\end{equation}
\item[{\rm (b)}]
If there is a constant $\l\in[0,\infty)$ such that 
\begin{equation}\label{SECT}
\sup_{x\in M}a(\b,\b)(x)\le\l^2
\end{equation}
then
\begin{equation}\label{SUB}
\limsup_{t\to0}t\log p(t,x,A,y)\le-d(x,A,y)^2/2.
\end{equation}
\end{itemize}
\end{theorem}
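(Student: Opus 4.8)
The plan is to prove the hitting-probability bound \eqref{HITS} first, then derive the heat-kernel-through-$A$ bounds \eqref{HSU} and \eqref{SUB} from it by a bridge/Markov-property argument, and finally address uniformity.

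First I would establish \eqref{HITS}. The key idea is to produce a supermartingale of exponential type adapted to the diffusion $B$ and to the sub-Riemannian distance. Fix the closed set $A$ with $U=M\sm A$ relatively compact, and write $r(z)=d(z,A)$. The function $r$ is Lipschitz with respect to $d$, and the sub-Riemannian gradient satisfies $a(\nabla r,\nabla r)\le 1$ wherever $r$ is differentiable; one should first regularize $r$ to a smooth $r_\delta$ with $a(\nabla r_\delta,\nabla r_\delta)\le 1+\delta$ on a neighbourhood of $\overline U$, using the compatibility of $d$ with the manifold topology and the relative compactness of $U$. Using the special form \eqref{CFO} of $\cL$, compute $\cL e^{\theta r_\delta/t}$ for a parameter $\theta>0$: the second-order term contributes $\tfrac{\theta^2}{2t^2}a(\nabla r_\delta,\nabla r_\delta)e^{\theta r_\delta/t}$ plus lower-order-in-$1/t$ terms (from $\tfrac12\dvv(a\nabla r_\delta)$ and from $a(\beta,\nabla r_\delta)$, the latter bounded on $\overline U$ since $U$ is relatively compact). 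Hence on $U$, for $t$ small,
$$
\cL e^{\theta r_\delta/t}\le \frac{\theta^2(1+\delta)}{2t^2}\,e^{\theta r_\delta/t}+\frac{C_\delta}{t}\,e^{\theta r_\delta/t}\le \frac{\theta^2(1+2\delta)}{2t^2}\,e^{\theta r_\delta/t}.
$$
Therefore $t\mapsto \exp\!\big(\theta r_\delta(B_t)/t-\tfrac{\theta^2(1+2\delta)}{2t}\big)$ stopped at the exit time from $U$ is a supermartingale. Optional stopping at $T\wedge t$, together with $r_\delta(B_0)=r_\delta(x)\ge d(x,A)-o(1)$ type estimates and $r_\delta\ge 0$ on $A$, gives
$$
\PP_x(T\le t)\le \exp\!\Big(-\frac{\theta\, d(x,A)}{t}+\frac{\theta^2(1+2\delta)}{2t}+\frac{o(1)}{t}\Big),
$$
and optimizing over $\theta$ (namely $\theta=d(x,A)/(1+2\delta)$) and then letting $\delta\to0$ yields \eqref{HITS}.

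Next, for \eqref{HSU}: by the identity displayed before the theorem,
$$
p(t,x,A,y)=p(t,x,y)\,\mu_t^{x,y}\big(\{\o:\o_s\in A\text{ for some }s\}\big),
$$
so it suffices to combine the on-diagonal-type bound. Split a bridge path that hits $A$ at the first hitting time into its pre- and post-hitting pieces and use the Markov property of the (unconditioned) diffusion together with the Chapman–Kolmogorov relation $p(t,x,A,y)\le\int_0^t\!\int_A p(s,x,z)\,p(t-s,z,y)\,(\text{measure})$ — more precisely, bound $p(t,x,A,y)$ by a spatial integral over $A$ and a time integral of $p(s,x,\cdot)$ against $p(t-s,\cdot,y)$, apply the already-established hitting bound \eqref{HITS} to control the mass reaching $A$ by time $s$, and apply the standard Varadhan-type upper bound (valid locally, away from $\infty$, with no symmetry hypothesis) to $p(t-s,z,y)$ for $z\in A$. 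Taking $t\log$ and optimizing over the split time $s\in[0,t]$ turns the sum of exponents into $-\tfrac12\min_{s}\big((d(x,A))^2/s'+(d(y,A))^2/(1-s')\big)$ with $s'=s/t$, whose minimum over $s'\in[0,1]$ is exactly $-(d(x,A)+d(y,A))^2/2$, giving \eqref{HSU}. For \eqref{SUB} under the sector condition \eqref{SECT}, one repeats the supermartingale construction but now with $r(z)=d(x,z)$ (distance from the fixed start point), which need not be bounded since $A$ is arbitrary: the extra term $a(\beta,\nabla r_\delta)$ is controlled by Cauchy–Schwarz as $|a(\beta,\nabla r_\delta)|\le \sqrt{a(\beta,\beta)}\sqrt{a(\nabla r_\delta,\nabla r_\delta)}\le\lambda\sqrt{1+\delta}$, which is $O(1)$ uniformly on all of $M$, so the supermartingale estimate survives globally; one then bounds the bridge mass hitting $A$ by conditioning at the first hitting point $z\in A$ and using $d(x,z)+d(z,y)\ge d(x,A,y)$, again optimizing the time split.

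The main obstacle is the first step: constructing the exponential supermartingale rigorously in the sub-Riemannian, non-elliptic, incomplete setting. Specifically, one must (i) smooth the distance function $r$ while keeping the gradient bound $a(\nabla r_\delta,\nabla r_\delta)\le 1+\delta$ — this is delicate because $r$ is only Lipschitz in the Carnot–Carathéodory metric, not the Euclidean one, and the regularization must respect the horizontal structure; (ii) justify optional stopping despite possible explosion, which is handled by working inside the relatively compact $U$ for \eqref{HITS} and, for \eqref{SUB}, by a localization/exhaustion argument letting the localizing domain grow to $M$ while using that the exponent only improves. Uniformity in $x,y$ over compact subsets of $M\sm\pd A$ then follows because all the estimates above depend on $x,y$ only through $d(x,A)$, $d(y,A)$, $d(x,A,y)$ and through constants ($C_\delta$, the local Varadhan bound) that are locally uniform; a standard compactness/continuity argument packages this.
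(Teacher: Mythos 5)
Your route is genuinely different from the paper's, and while the supermartingale idea works for \eqref{HITS}, the plan has gaps that become fatal for \eqref{HSU} and especially \eqref{SUB}.

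For \eqref{HITS}, the supermartingale approach is workable in principle, but note a sign/orientation slip: with $r_\delta(z)\approx d(z,A)$ and $M_s=\exp(\theta r_\delta(B_s)/t - cs)$, optional stopping at $T\wedge t$ gives $\PP_x(T\le t)\,e^{-ct}\le e^{\theta d(x,A)/t}$, which bounds $\PP_x(T\le t)$ by a quantity that is \emph{large}. You should instead take $r_\delta(z)\approx d(x,z)$ (so $r_\delta(x)=0$ and $r_\delta\ge d(x,A)$ on $A$); then the same argument yields $\PP_x(T\le t)\le\exp\{ct-\theta d(x,A)/t\}$ as you intend. You also gloss over the need to smooth $d$ to a $C^2$ function with $a(\nabla r_\delta,\nabla r_\delta)\le1+\delta$ \emph{and} $\dvv(a\nabla r_\delta)$ locally bounded; in the sub-Riemannian setting this requires justification, which the paper supplies through the dual characterization of $d$ in Proposition~\ref{DUAL} (but the paper then works with only Lipschitz dual functions in an $L^2$ framework, never needing second derivatives).

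The real gap is in going from hitting probabilities to the heat kernel bounds \eqref{HSU} and \eqref{SUB}. Your conditioning-at-first-hitting plan requires, as an input, a Gaussian-type upper bound on $p(t-s,z,y)$ for $z\in\pd A$, uniform in $z$. You call this ``the standard Varadhan-type upper bound (valid locally, away from $\infty$, with no symmetry hypothesis),'' but no such bound is available off the shelf in the incomplete, hypoelliptic, non-symmetric case — establishing it is precisely the hard part of the paper (Proposition~\ref{HKUB}, proved via a Davies--Gaffney $L^2$ exponential-weight estimate combined with Moser's parabolic mean-value inequality, which in turn rests on Nagel--Stein--Wainger volume doubling and Jerison's Poincar\'e inequality). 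For \eqref{HSU} the set $\pd A$ is compact, so Azencott's decomposition \eqref{DOOB} plus a localized Gaussian bound $p_V(t,z,y)$ would suffice, and this is exactly what the paper does — but these inputs still have to be built; they are not elementary. For \eqref{SUB} the situation is worse: $A$ is arbitrary, so $\pd A$ is in general noncompact, and there is no way to obtain a bound on $p(t-s,z,y)$ uniformly over $z\in\pd A$ from a hitting-probability estimate alone. A supermartingale bounds a scalar probability, not a density; to pass to the pointwise heat kernel you need a parabolic regularity input such as the mean-value inequality of Proposition~\ref{MVE}, and you need a mechanism to see $p(t,x,A,y)$ itself — in the paper this is the reflection construction on $\tilde M=M^-\cup M^+$ inside the proof of Proposition~\ref{HKUB}, which encodes the ``passed through $A$'' event into a signed kernel $\tilde p$ that the $L^2$/Moser machinery can then estimate globally, with the sector condition \eqref{SECT} guaranteeing the Gronwall step closes uniformly on all of $M$. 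Your proposal has no substitute for either ingredient, so the argument for \eqref{SUB} does not close.

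For comparison: the paper's proof of Theorem~\ref{HTK} is short once Propositions~\ref{DUAL}, \ref{HKUB} and \ref{HITEST} are in hand. \eqref{HITS} and \eqref{SUB} follow immediately from the Gaussian bounds \eqref{HITR} and \eqref{PTXKY} combined with the small-ball volume estimate \eqref{UADE}; \eqref{HSU} uses Hsu's hitting-time identity $p(t,x,A,y)=\E_x[p(t-T,B_T,y)1_{\{T<t\}}]$ (which matches your intuition) together with \eqref{DOOB} to control $p(t-T,z,y)$ for $z\in\pd U$ via $p_V$ for relatively compact $V\supset\supset U$. So the hitting-time decomposition appears in the paper too, but only for \eqref{HSU} and only as the last step, after the Gaussian bounds have been established analytically.
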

We recall that the measure $\nu$ can be chosen independently of the sub-Riemannian structure,
giving a large class of `symmetric' examples to which the theorem applies, with the choice $\b=0$.
The assumption \eqref{CFO} imposes a qualitative constraint on the asymmetry of $\cL$.
When $\cL$ is written in H\"ormander's form, as at \eqref{HF} below, 
it forces $X_0(x)$ to lie in the span of $X_1(x),\dots,X_m(x)$ at every point $x$.
The sector condition \eqref{SECT} is a stronger, uniform constraint on the asymmetry of $\cL$ with respect to $\nu$.
We will deduce from Theorem \ref{HTK} the following small-time logarithmic asymptotics of the heat kernel.

\begin{theorem}\label{VAR}
Suppose that $\cL$ has the form \eqref{CFO}.
Define
$$
S=\{(x,y)\in M\times M:d(x,y)\le d(x,\infty)+d(y,\infty)\}.
$$
Then, as $t\to0$, uniformly on compacts in $S$,
\begin{equation}\label{VARB}
t\log p(t,x,y)\to-d(x,y)^2/2.
\end{equation}
Moreover, if $\cL$ satisfies \eqref{SECT},
then \eqref{VARB} holds uniformly on compacts in $M\times M$.
\end{theorem}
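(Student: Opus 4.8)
The plan is to derive the asymptotics in two halves. The lower bound
\[
\liminf_{t\to0}t\log p(t,x,y)\ge-d(x,y)^2/2,
\]
uniformly on compacts in $M\times M$, should hold with no restriction at all, by a standard localization argument: one picks a relatively compact open set $V$ containing a fixed minimizing path from $x$ to $y$, observes that the Dirichlet heat kernel $p_V$ is a lower bound for $p$, and on the complete (after a suitable modification) or at least relatively compact piece $V$ applies the classical Varadhan-type lower estimate, which for hypoelliptic operators in the form \eqref{CFO} is by now well known (via the Molchanov/Léandre approach or via the logarithmic equivalence for the Dirichlet kernel on a complete enlargement of $V$). The drift term is harmless for the lower bound because $a(\b,\nabla f)$ is a lower-order perturbation whose contribution to $t\log p$ vanishes as $t\to0$; more precisely, a Girsanov change of measure relates the $\cL$-bridge to the symmetric bridge with a Radon--Nikodym density whose logarithm is $O(1)$ after multiplication by $t$. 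So the whole content is the matching upper bound.

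For the upper bound I would use Theorem \ref{HTK}. Fix $(x,y)$ in a compact subset $K$ of $S$ (or of $M\times M$ under \eqref{SECT}). The idea is: either the minimizing path from $x$ to $y$ stays in a fixed compact set — in which case one gets the upper bound from the compact (essentially complete) theory — or it wanders out towards infinity, and then one interposes a closed set $A$ with $M\setminus A$ relatively compact and invokes \eqref{HSU}, respectively \eqref{SUB}. Concretely, write $p(t,x,y)=p_U(t,x,y)+p(t,x,A,y)$ for $U=M\setminus A$. On $U$ relatively compact one bounds $\limsup t\log p_U(t,x,y)\le -d_U(x,y)^2/2$ where $d_U$ is the intrinsic distance in $U$; by choosing $A$ to be the complement of a large relatively compact set exhausting $M$, $d_U(x,y)\downarrow d(x,y)$ along minimizers that do not escape, so this term contributes at most $-d(x,y)^2/2+o(1)$. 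The through-$A$ term is controlled by \eqref{HSU}: $\limsup t\log p(t,x,A,y)\le-(d(x,A)+d(y,A))^2/2$. Now here is where $S$ enters. If $(x,y)\in S$, then $d(x,y)\le d(x,\infty)+d(y,\infty)$, and by taking $A$ far enough out we can make $d(x,A)$ as close as we like to $d(x,\infty)$ and $d(y,A)$ as close as we like to $d(y,\infty)$, so $(d(x,A)+d(y,A))^2\ge d(x,y)^2-\ve$ eventually; hence the through-$A$ term is also $\le -d(x,y)^2/2+o(1)$. Combining the two pieces gives $\limsup_{t\to0}t\log p(t,x,y)\le -d(x,y)^2/2$. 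Under \eqref{SECT} one uses \eqref{SUB} instead, which gives $\limsup t\log p(t,x,A,y)\le -d(x,A,y)^2/2$ with no appeal to $S$, and since $d(x,A,y)\ge d(x,y)$ always (by the triangle inequality through $z\in A$), the through-$A$ term is automatically $\le-d(x,y)^2/2$; so the conclusion extends to all of $M\times M$.

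The uniformity on compacts needs a little care but is routine: Theorem \ref{HTK} already provides its estimates uniformly for $x,y$ in compact subsets of $M\setminus\pd A$, one chooses $A$ depending only on the compact set $K$ (not on the individual $(x,y)$), and the map $(x,y)\mapsto d_U(x,y)$ is continuous, so a finite subcover argument upgrades pointwise limsups to uniform ones; similarly the lower bound is uniform because the minimizing paths for $(x,y)$ ranging over a compact set can be taken inside a single relatively compact $V$. The main obstacle, and the place where the hypothesis $S$ is genuinely needed, is the matching of the constant in the through-$A$ term: without \eqref{SECT} one only controls $p(t,x,A,y)$ by $d(x,A)+d(y,A)$ rather than by $d(x,A,y)$, and $d(x,A)+d(y,A)$ can be strictly smaller than $d(x,y)$ precisely when $(x,y)\notin S$; reconciling this gap is exactly what the sector condition buys us, by upgrading \eqref{HSU} to \eqref{SUB}. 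I would therefore expect the proof to be short modulo Theorem \ref{HTK}, with the only subtlety being to keep track of which constant ($d(x,A)+d(y,A)$ versus $d(x,A,y)$) is available in each regime and to verify that $d_U(x,y)\to d(x,y)$ as $U$ exhausts $M$ along non-escaping minimizers, which follows from the compatibility of $d$ with the manifold topology.
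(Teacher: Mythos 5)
Your proposal follows the same route as the paper for the upper bound: decompose $p=p_U+p(\cdot,\cdot,A,\cdot)$ with $U=M\sm A$ relatively compact, bound $p_U$ using the fact that $\cL$ restricted to $U$ satisfies \eqref{SECT} (hence $\limsup t\log p_U\le -d_U(x,y)^2/2\le -d(x,y)^2/2$ since $d_U\ge d$), and control the through-$A$ term via \eqref{HSU} together with the $S$-condition, respectively via \eqref{SUB}. (One small simplification you miss: under \eqref{SECT} the paper just takes $A=M$ in \eqref{SUB}, which gives the upper bound in one line without any decomposition.) Also, your remark that one must ``verify $d_U(x,y)\to d(x,y)$'' is unnecessary for the upper bound; $d_U\ge d$ is all that is used.

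The one place where your sketch glosses over a genuine step is the lower bound. You call it ``a standard localization argument'': take a relatively compact tube $V$ around a near-minimizer, use $p_V\le p$, and apply a ``classical Varadhan-type lower estimate'' to $p_V$. But L\'eandre's lower bound is for the full heat kernel of an operator on $\R^d$ with bounded smooth coefficients, not for the Dirichlet kernel $p_V$, and passing from one to the other is exactly where Theorem~\ref{HTK} and a chaining argument do real work in the paper. Concretely: one extends the vector fields from the tube $U$ to bounded fields on $\R^d$, applies L\'eandre's lower bound to the extended kernel $\tilde p(t,y_{k-1},y_k)$ at a scale $\delta$ chosen so small that, by \eqref{HSU}, the through-boundary term $\tilde p(t,y_{k-1},\R^d\sm U,y_k)$ has a strictly more negative exponent; this gives the lower bound for $\tilde p_U=p_U$ at small scale, and then a chaining over $n$ steps recovers the full distance $d(x,y)$. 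Without the chaining, the through-boundary term need not be subdominant at the $x$-to-$y$ scale, so the localization is not as automatic as you suggest; and the Girsanov heuristic you invoke to dispose of the drift is not what the paper uses (L\'eandre's theorem already accommodates the drift term $X_0$) and would itself need a quantitative argument at the $t\log$ level. The high-level structure you describe is right, but you should be aware that the ``routine'' step is precisely where \eqref{HSU} gets used a second time.
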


We will deduce from Theorem \ref{HTK} also the following concentration estimate for the bridge measures $\mtxy$ on $\oxy$.
A path $\g\in\oxy$ is minimal if $I(\g)<\infty$ and 
$$
I(\o)\ge I(\g)\text{ for all $\o\in\oxy$}.
$$
We will say that $\g$ is strongly minimal if, in addition, there exist $\d>0$ and a relatively compact open set $U$ in $M$ such that%
\footnote{
When $M$ is complete for the sub-Riemannian distance, all metric balls are relatively compact, 
so every minimal path is strongly minimal.
Also, if there is a unique minimal path $\g\in\oxy$, which is strongly minimal, then, by a weak compactness argument,
for all relatively compact domains $U$ containing $\g$, there is a $\d>0$ such that \eqref{SMIN} holds.
}
\begin{equation}\label{SMIN}
I(\o)\ge I(\g)+\d\text{ for all $\o\in\oxy$ which leave $U$}.
\end{equation}

\begin{theorem}\label{GEOD}
Suppose that $\cL$ has the form \eqref{CFO}.
Let $x,y\in M$ and suppose that there is a unique minimal path $\g\in\oxy$.
Suppose either that
$$
d(x,y)<d(x,\infty)+d(y,\infty),
$$
or that $\cL$ satisfies \eqref{SECT} and $\g$ is strongly minimal.
Write $\dg$ for the unit mass at $\g$.
Then
$$
\mtxy\to\dg\q\text{weakly on $\oxy$ as $t\to0$}. 
$$
\end{theorem}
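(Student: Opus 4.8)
The strategy is to show that for every open neighbourhood $V$ of $\g$ in $\oxy$, we have $\mtxy(V^c)\to0$ as $t\to0$; since $\dg$ is a point mass and $\oxy$ with the uniform topology is a reasonable (Polish) space, this suffices for weak convergence. So fix an open $V\ni\g$. The complement $V^c$ is closed in $\oxy$, and we must estimate the probability that the rescaled bridge lies in $V^c$, namely
$$
\mtxy(V^c)=\frac{\text{(unnormalized mass of bridge paths in }V^c)}{p(t,x,y)}.
$$
The denominator is controlled from below by Theorem \ref{VAR}: under either hypothesis, $t\log p(t,x,y)\to-d(x,y)^2/2=-I(\g)/2$. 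So the whole problem reduces to an upper bound on the numerator showing that paths staying away from $\g$ cost strictly more energy, hence are exponentially more expensive than $e^{-I(\g)/2t}$.

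The main work is therefore a large-deviations-type upper bound on $p(t,x,y)\,\mtxy(V^c)$ using Theorem \ref{HTK} as the available tool. The point is that Theorem \ref{HTK} controls the heat kernel \emph{through a closed set} $A$, i.e. the mass of bridge paths that hit $A$. To exploit this I would argue as follows. In the first case ($d(x,y)<d(x,\infty)+d(y,\infty)$), choose a relatively compact open set $W$ large enough that $\g$ stays well inside $W$ and so that, with $A=M\setminus W$, the bound \eqref{HSU} gives
$$
\limsup_{t\to0}t\log p(t,x,A,y)\le-(d(x,A)+d(y,A))^2/2<-d(x,y)^2/2;
$$
here the strict inequality uses that $d(x,A)+d(y,A)$ can be made close to $d(x,\infty)+d(y,\infty)>d(x,y)$ by taking $W$ large. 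Hence bridge paths leaving $W$ have vanishing $\mtxy$-mass. On the complementary event that the path stays in $W$, $V^c\cap\{\o:\o\subseteq \overline W\}$ is a compact set of paths each of energy $\ge I(\g)+\eta$ for some $\eta>0$ — this is where the uniqueness of $\g$ enters, via a standard lower-semicontinuity/compactness argument for the energy functional $I$ restricted to paths in $\overline W$ avoiding $V$. One then covers this compact path-set by finitely many "tubes" each forced to pass through a small closed set $A_i$ (a codimension-zero slice transverse to the region between $\g$ and $V^c$) with $d(x,A_i)+d(y,A_i)\ge\sqrt{I(\g)+\eta'}$ for some $\eta'>0$, and applies \eqref{HSU} to each $A_i$; summing the finitely many contributions gives
$$
\limsup_{t\to0}t\log\big(p(t,x,y)\,\mtxy(V^c)\big)\le-(I(\g)+\eta')/2<-I(\g)/2.
$$
Dividing by the denominator asymptotics yields $t\log\mtxy(V^c)\le-\eta'/2+o(1)<0$, so $\mtxy(V^c)\to0$.

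In the second case we replace the localizing set $W$ using the strong minimality hypothesis \eqref{SMIN}: the relatively compact $U$ supplied there already has the property that paths leaving $U$ have energy $\ge I(\g)+\d$, so taking $A=M\setminus U$ and applying \eqref{SUB} (valid now because the sector condition \eqref{SECT} holds) gives $\limsup t\log p(t,x,A,y)\le -d(x,A,y)^2/2\le-(I(\g)+\d)/2$, killing the mass of paths that exit $U$; inside $U$ one runs the same finite-tube covering argument. The hard part, and the place requiring care, is the covering step: one must show that any path in $\overline W$ (resp. $\overline U$) that stays outside the neighbourhood $V$ of $\g$ can be trapped passing through one of finitely many small closed sets $A_i$ whose distance sum strictly exceeds $d(x,y)$, and that the number of such sets is finite uniformly as $t\to0$. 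This uses compactness of $\overline W$, the compactness of sublevel sets of $I$ among paths confined to a relatively compact region, lower semicontinuity of $I$, and the uniqueness of the minimizer to guarantee a uniform energy gap $\eta>0$ away from $V$. Everything else — the reduction to neighbourhoods of $\g$, the use of Theorem \ref{VAR} for the denominator, and the passage from hitting-a-slice to an energy lower bound via the definition of $d(x,A,y)$ — is routine.
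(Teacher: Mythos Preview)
Your overall architecture is sound and matches the paper: control the denominator $p(t,x,y)$ from below via Theorem~\ref{VAR}, kill the mass of paths that leave a relatively compact set via Theorem~\ref{HTK}, and in Case~1 reduce to Case~2 by restricting to that set (where the sector condition holds automatically). The gap is in the ``finite tube covering'' step inside the compact region.

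The only tool you invoke there is the estimate on $p(t,x,A_i,y)$, i.e.\ on the mass of bridges that visit $A_i\subset M$ at \emph{some} time. But this quantity is blind to \emph{when} the bridge visits $A_i$, and that is fatal. Take $V=\{\o:\sup_t d(\o_t,\g_t)<\epsilon\}$ and consider a reparametrization $\o_t=\g_{\phi(t)}$ with $\phi(0)=0$, $\phi(1)=1$, and $|\phi(t_0)-t_0|$ large for some $t_0$. Such an $\o$ lies in $V^c$ and, by uniqueness of the minimizer, has $I(\o)>I(\g)$; yet its image in $M$ is exactly $\g([0,1])$. Any closed set $A_i$ that $\o$ is forced to hit must therefore meet $\g([0,1])$, and for $z=\g_s$ one has $d(x,z)+d(z,y)=d(x,y)$, so $d(x,A_i,y)\le d(x,y)$ and a fortiori $d(x,A_i)+d(y,A_i)\le d(x,y)$. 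Hence no choice of $A_i$ can give the strict inequality $d(x,A_i,y)^2\ge I(\g)+\eta'$ (let alone $(d(x,A_i)+d(y,A_i))^2\ge I(\g)+\eta'$) that your argument needs. The compactness/lower-semicontinuity facts you cite are all true, but they give an energy gap for \emph{paths}, and the hitting functional $p(t,x,A,y)$ only sees \emph{ranges}.

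What is missing is a device that records time along the bridge. The paper supplies it by tensoring with an independent real Brownian bridge $\tau$ from $0$ to $1$ of speed $\ve$: one works on $\tilde M=M\times\R$ with $\tilde\cL=\cL+\tfrac12(\partial/\partial\tau)^2$, applies the through-a-set estimate \eqref{SUB} to a closed set $\tilde A\subset\tilde M$ defined as the complement of the sublevel set $\{(\g'_t,\s_t):I(\g')+I(\s)<d(x,y)^2+1+\d\}$, and then uses that $\tau$ concentrates near the identity map as $\ve\to0$. In $\tilde M$ the reparametrized paths above are separated from $\g$ because their second coordinate is wrong, so $\tilde d(\tilde x,\tilde A,\tilde y)^2=d(x,y)^2+1+\d$ genuinely exceeds $\tilde d(\tilde x,\tilde y)^2=d(x,y)^2+1$. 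An alternative repair would be to discretize time and use the Markov property at finitely many fixed times together with the pointwise asymptotics of Theorem~\ref{VAR}, but that is a different argument from the one you sketched and requires its own uniformity estimates; the estimate \eqref{HSU}/\eqref{SUB} alone does not suffice.
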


The authors would like to thank Michel Ledoux and Laurent Saloff-Coste for helpful discussions.
JN would like to acknowledge the hospitality of Universit\'e Paul Sabatier, Toulouse, where this work was completed.

\section{Discussion and review of related work}
The small-time logarithmic asymptotics for the heat kernel \eqref{VARA} were proved by Varadhan \cite{MR0208191} in the case when $M=\R^d$ and $a$ is uniformly bounded and uniformly positive-definite.
Similar statements have since been shown in many more general contexts. 
See, for example, the work of Hino and Ramirez \cite{MR1988472} for a version in local Dirichlet spaces.
In this paper, we stay in the context of a finite-dimensional $\cinf$ manifold $M$, so we restrict our review mainly to that context.
Azencott 
\cite{MR634964}
considered the case where $a$ is positive-definite but $M$ is possibly incomplete for the associated metric $d$.
He showed 
\cite[Chapter 8, Proposition 4.4]{MR634964}, 
that the condition
\begin{equation}\label{AZC}
d(x,y)<\max\{d(x,\infty),d(y,\infty)\}
\end{equation}
is sufficient for a Gaussian-type upper bound which then implies \eqref{VARA}. 
In particular, completeness is sufficient.
He showed also 
\cite[Chapter 8, Proposition 4.10]{MR634964}, 
that such an upper bound holds for $p_D(t,x,y)$ without the condition \eqref{AZC}, 
whenever $D$ is a relatively compact open set in $M$.
Azencott also gave an example 
\cite[Chapter 8, Section 2]{MR634964}, 
which shows that \eqref{VARA} can fail without a suitable global condition on the operator $\cL$.
Hsu \cite{MR1089046} 
showed that Azencott's condition \eqref{AZC} for \eqref{VARA} could be relaxed to 
\begin{equation}\label{HSC}
d(x,y)\le d(x,\infty)+d(y,\infty)
\end{equation}
and gave an example to show that \eqref{VARA} can fail without this condition.

The methods in \cite{MR634964} and \cite{MR1089046} make essential use of the following disintegration identity.
See \cite[Chapter 2, Theorem 4.2]{MR634964}, 
but the result depends only on the strong Markov property and is valid also in the present context. 
Let $D,U$ be open sets in $M$ with $D$ compactly contained in $U$.
Then, for $x\in M$ and $y\in D$, we have 
\begin{equation}\label{DOOB}
p(t,x,y)=1_U(x)p_U(t,x,y)+\int_{[0,t)\times\pd D}p_U(t-s,z,y)\mu_x(ds,dz)
\end{equation}
where $1_U$ denotes the indicator function of $U$ and
$$
\mu_x=\sum_{n=1}^\infty\mu_x^n,\q
\mu_x^n([0,t]\times A)=\PP_x(B_{T_n}\in A,T_n\le t)
$$
where we set $T_0=0$ and define recursively for $n\ge1$
$$
S_n=\inf\{t\ge T_{n-1}:B_t\not\in U\},\q
T_n=\inf\{t\ge S_n:B_t\in D\}.
$$
This can be combined with the estimate
$$
\mu_x([0,t]\times\pd D)\le C(D,U)t,\q C(D,U)<\infty
$$
to deduce upper bounds on $p(t,x,y)$ from upper bounds on $p_U(t,x,y)$.
But, in the case where $U$ is relatively compact in $M$, 
upper bounds on the Dirichlet heat kernel $p_U$ in $U$ are relatively easy to obtain.
The same identity \eqref{DOOB} is also used elsewhere to deduce estimates under local hypotheses from estimates requiring global hypotheses.
See for example \cite{MR783181} on hypoelliptic heat kernels,
and \cite{MR3572263} on Hunt processes.

Varadhan's asymptotics \eqref{VARA} were extended to the sub-Riemannian case by L\'eandre 
\cite{MR871256,MR904825} 
under the hypothesis
\begin{equation}\label{URD}
M=\R^d\q\text{and}\q X_0,X_1,\dots,X_m\text{  are bounded with bounded derivatives of all orders}.
\end{equation}
Here, $X_0$ is the vector field on $M$ which appears when we write $\cL$ in H\"ormander's form
\begin{equation}\label{HF}
\cL=\frac12\sum_{\ell=1}^mX_\ell^2+X_0.
\end{equation}
Our Theorem \ref{VAR} extends \eqref{VARA} to a general sub-Riemannian manifold, 
subject to condition \eqref{CFO} and either Hsu's condition \eqref{HSC}, understood for the sub-Riemannian metric, 
or the sector condition \eqref{SECT}.

\def\j{
Under the same hypothesis, and provided $X_0$ lies in the span of $X_1,\dots,X_m$ at every point,
Jerison and Sanchez-Calle 
\cite{MR922334}
and Kusuoka and Stroock \cite[Theorem 4.13]{MR914028}
obtained Gaussian upper and lower bounds for the heat kernel.
As both Azencott and Hsu remark, the identity \eqref{DOOB} provides a potential route to
prove Gaussian upper bounds under local hypotheses in the sub-Riemannian case, 
once one has similar bounds under \eqref{URD}.
Grigor'yan and Kajino [GK] carry out this procedure in great generality.
}

A powerful approach to analysis of the heat equation emerged in the work of Grigor'yan \cite{MR1098839} and Saloff-Coste \cite{MR1150597,MR1354894}.
They showed that a local volume-doubling inequality, combined with a local Poincar\'e inequality, implies a local Sobolev inequality, which then allows 
to prove regularity properties for solutions of the heat equation by Moser's procedure, and then heat kernel upper bounds by the Davies--Gaffney argument.
This was taken up in the general context of Dirichlet forms by Sturm who proved a Gaussian upper bound \cite[Theorem 2.4]{MR1355744} under such local conditions, without completeness and for non-symmetric operators.
Moreover, in this bound, the intrinsic metric appears with the correct constant in the exponent, which allows to deduce the correct logarithmic asymptotic upper bound \eqref{VARA}.
This intrinsic metric corresponds in our context to the dual characterization of the sub-Riemannian metric. 
Our Gaussian upper bounds can be seen as applications of Sturm's result.
For greater transparency, we will re-run part of the argument in our context, rather than embed in the general framework and check the necessary hypotheses.
The approach thus adopted no longer relies on working outwards from well-behaved heat kernels using \eqref{DOOB}, but reduces the global 
aspect to a certain sort of $L^2$-estimate for solutions of the heat equation, which requires no completeness in the underlying space. 
See the proof of Proposition \ref{HKUB}.
One finds that the sector condition \eqref{SECT} is enough to prevent pathologies in the $L^2$-estimate, thus dispensing with the need for condition \eqref{HSC}.
This is a significant extension: for example, \eqref{SECT} is satisfied trivially by all symmetric operators $\cL f=\frac12\dvv(a\nabla f)$, without any control on the diffusivity $a$ or the symmetrizing measure $\nu$ near infinity.

The small-time convergence of bridge measures is known in the
case of Brownian motion in a complete Riemannian manifold by a result of Hsu \cite{MR1027823}. 
For a compact sub-Riemannian manifold, it was shown by Bailleul \cite{LDP}.
It is also known under the assumption \eqref{URD} and subject to the condition that $a(x)$ is positive-definite
by work of Inahama \cite{MR3391911}. 
Theorem \ref{GEOD} is new, both for incomplete Riemannian manifolds and in the non-compact sub-Riemannian case.

We have not attempted to minimize regularity assumptions for coefficients but note that their use for upper bounds is limited to certain basic tools.
The analysis \cite{MR793239} of metric balls, in particular the volume-doubling inequality \eqref{DP}, is done for the case where $X_1,\dots,X_m$ are $\cinf$.
Also the Poincar\'e inequality \eqref{PI} is proved in \cite{MR850547} in this framework.
These points aside, for upper bounds, the $\cinf$ assumptions on $a$, $\nu$ and $\b$ are used only to imply local boundedness.
While the dual characterization of the distance function is unaffected by modification of $a$ on a Lebesgue null set, the definition as an infimum over paths is more fragile, 
and current proofs that these give the same quantity rely on the continuity of $a$. 
In contrast to the Riemannian case \cite{MR1484769}, for lower bounds in the sub-Riemannian case, in particular for L\'eandre's argument using Malliavin calculus, current methods demand more regularity.

\section{Review of some analytic prerequisites}
We work in the set-up of Section \ref{NPD}.
Nagel, Stein \& Wainger's analysis \cite{MR793239} of the sub-Riemannian distance and of the volume of sub-Riemannian metric balls implies the following statements.
There is a covering of $M$ by charts $\phi:U\to\R^d$ such that, for some constants $\a=\a(U)\in(0,1]$ and $C=C(U)\in[1,\infty)$, for all $x,y\in U$, 
\begin{equation}\label{DIC}
C^{-1}|\phi(x)-\phi(y)|\le d(x,y)\le C|\phi(x)-\phi(y)|^\a.
\end{equation}
Moreover, there is a covering of $M$ by open sets $U$ such that, for some constant $C=C(U)\in(1,\infty)$, for all $x\in U$ and all $r\in(0,\infty)$ such that $B(x,2r)\sse U$, we have the volume-doubling inequality
\begin{equation}\label{DP}
\nu(B(x,2r))\le C\nu(B(x,r)).
\end{equation}
Moreover, in \cite[Theorem 1]{MR793239}, a uniform local equivalent for $\nu(B(x,r))$ is obtained, which implies that, for all $x\in M$,
\begin{equation}\label{ADE}
\lim_{r\to0}\frac{\log(\nu(B(x,r)))}{\log r}=N(x).
\end{equation}
Here, $N(x)$ is given by
\begin{equation*}\label{LADE}
N(x)=N_1(x)+2N_2(x)+3N_3(x)+\dots
\end{equation*}
where $N_1(x)+\dots+N_k(x)$ is the dimension of the space spanned at $x$ by brackets of the vector fields $X_1,\dots,X_m$ of length at most $k$.
While the limit \eqref{ADE} is in general not locally uniform, there is also the following uniform asymptotic lower bound on the volume of small balls, for any compact set $F$ in $M$,
\begin{equation}\label{UADE}
\limsup_{r\to0}\sup_{x\in F}\frac{\log(\nu(B(x,r)))}{\log r}\le N(F)
\end{equation}
where
$$
N(F)=\sup_{x\in F}N(x)<\infty.
$$
We recall also the local Poincar\'e inequality proved by Jerison 
\cite{MR850547}.
There is a covering of $M$ by open sets $U$ such that, for some constant $C=C(U)<\infty$, for all $x\in U$ and all $r\in(0,\infty)$ such that $B(x,2r)\sse U$, for all $f\in\cinf_c(M)$, we have
\begin{equation}\label{PI}
\int_{B(x,r)}|f-\<f\>_{B(x,r)}|^2d\nu\le Cr^2\int_{B(x,2r)}a(\nabla f,\nabla f)d\nu
\end{equation}
where $\<f\>_B=\int_Bfd\nu/\nu(B)$ is the average value of $f$ on $B$.

As Saloff-Coste claimed \cite[Theorem 7.1]{MR1354894}, the validity of Moser's argument, given \eqref{DP} and \eqref{PI}, extends with minor modifications to suitable non-symmetric operators.
This leads to the following parabolic mean-value inequality.

\begin{proposition}\label{MVE}
Let $\cL$ be given as in equation \eqref{CFO} and let $U$ be a relatively compact open set in $M$.
Then there is a constant $C=C(U)<\infty$ with the following property.
For any non-negative weak solution $u$ of the equation $(\pd/\pd t)u_t=\cL u_t$ on $(0,\infty)\times U$, for all $x\in U$, all $t\in(0,\infty)$ and all $r\in(0,\infty)$ such that $B(x,2r)\sse U$ and $r^2\le t/2$, we have
\begin{equation*}
u_t(x)^2\le C\fint_{t-r^2}^t\fint_{B(x,r)}u_s^2d\nu ds.
\end{equation*}
Moreover, the same estimate holds if $\cL$ is replaced by its adjoint $\hat\cL$ under $\nu$.
\end{proposition}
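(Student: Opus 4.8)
The plan is to establish the parabolic mean-value inequality by running Moser's iteration scheme in the non-symmetric setting, following Saloff-Coste's adaptation but keeping track of the fact that $\cL$ is not self-adjoint under $\nu$. The key point is that the only structural inputs needed are the volume-doubling inequality \eqref{DP} and the Poincar\'e inequality \eqref{PI}, both of which hold on a covering of $M$ by open sets and hence, by a standard compactness and chaining argument, on any relatively compact $U$ with a uniform constant $C(U)$. The non-symmetry enters only through first-order terms, which are lower-order perturbations and can be absorbed using Cauchy--Schwarz at the cost of enlarging the constant; since $U$ is relatively compact and the coefficients of $\cL$ are $\cinf$, the relevant quantities (the drift $a(\b,\cdot)$ and the zeroth-order part, which here vanishes since $\cL 1 = 0$) are bounded on $U$.

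First I would record the Caccioppoli-type energy estimate: for a non-negative weak subsolution $u$ of $\dt u = \cL u$ and a suitable space-time cutoff $\psi$, testing the weak formulation against $\psi^2 u^{p-1}$ (for $p \ge 1$) and using the ellipticity of $a$ along the distribution spanned by $X_1,\dots,X_m$, together with the bound on the drift term via Young's inequality, yields control of $\int \psi^2 a(\nabla u^{p/2}, \nabla u^{p/2})$ by $\int (|\dt \psi^2| + |\nabla \psi|^2) u^p$ plus a term linear in $\int \psi^2 u^p$ coming from the drift. Second, I would invoke the Sobolev inequality that follows from \eqref{DP} and \eqref{PI} (this is the Grigor'yan--Saloff-Coste mechanism alluded to in the text) to upgrade this energy estimate into a reverse-H\"older / $L^q$--$L^p$ gain on parabolic cylinders, with $q/p = \kappa > 1$ a fixed doubling exponent. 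Third, iterating over a sequence of shrinking cylinders $Q_k = (t - r_k^2, t) \times B(x, r_k)$ with $r_k \downarrow r$ and summing the resulting geometric series of exponents gives the $L^\infty$ bound $u_t(x)^2 \le C \fint_{t-r^2}^t \fint_{B(x,r)} u_s^2 \, d\nu\, ds$, where the condition $r^2 \le t/2$ ensures that all the cylinders fit inside the domain of the solution in the time direction and $B(x,2r) \subseteq U$ ensures the spatial Sobolev/Poincar\'e constants apply uniformly. For the adjoint $\hat\cL$: since $\hat\cL$ has the same principal symbol $a/2$ and differs from $\cL$ only in first- and zeroth-order terms which remain bounded on the relatively compact set $U$, the identical argument applies verbatim, giving the same form of estimate (with a possibly different constant, which we absorb into $C(U)$ by taking the maximum).

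The main obstacle I anticipate is the verification that Moser's iteration genuinely goes through without symmetry: in the symmetric case one integrates by parts freely and the bilinear form is manifestly nonnegative, whereas here the first-order term $a(\b, \nabla u)$ produces, after testing, a term like $\int \psi^2 u^{p-1} a(\b, \nabla u)$ which is not sign-definite and must be dominated by $\epsilon \int \psi^2 a(\nabla u^{p/2}, \nabla u^{p/2}) + C_\epsilon \int \psi^2 u^p$; one must check that the absorbed term $C_\epsilon \int \psi^2 u^p$ does not accumulate badly over the infinitely many iteration steps. This is handled by the standard observation that such a lower-order term contributes a bounded multiplicative factor at each stage whose product over all stages still converges, because the exponents $p_k = \kappa^k$ grow geometrically and the per-step loss is only polynomial in $p_k$; this is precisely the content of Saloff-Coste's remark \cite[Theorem 7.1]{MR1354894} that the argument extends with minor modifications. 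A secondary, more bookkeeping-type obstacle is ensuring the Poincar\'e and doubling constants can be taken uniform over $U$: since \eqref{DP} and \eqref{PI} are stated for a \emph{covering} by open sets, one covers the compact closure $\bar U$ by finitely many such sets and uses a Whitney-type chaining of balls to obtain a global Poincar\'e inequality on $U$ at the scales relevant to the iteration, which is routine.
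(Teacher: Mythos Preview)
Your proposal is correct in outline and identifies the right structural ingredients: doubling plus Poincar\'e give a local Sobolev inequality, the drift term is absorbed as a lower-order perturbation via Cauchy--Schwarz, and the geometric growth of exponents in the iteration controls the accumulated loss. Your treatment of the adjoint is also fine, including the observation that $\hat\cL$ acquires a bounded zeroth-order term $-\dvv(a\b)$ which Moser's scheme accommodates.

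The paper, however, does not carry out the iteration itself. Its proof consists of a single sentence referring the reader to the general non-symmetric parabolic mean-value inequalities of \cite[Theorem 1.2]{MR3118628} and \cite[Theorem 4.6]{1205.6493}, leaving implicit the verification that the relevant structural hypotheses (locality, a sector-type bound, the identification of the symmetric strongly-local part with the model form $\cE_0$, and a chain rule for the skew part) hold for the Dirichlet form $\cE(f,g)=\int_Ma(\nabla f,\nabla g)d\nu-\int_Ma(\b,\nabla f)gd\nu$ on a relatively compact $U$. So the paper's route is ``embed into a known abstract framework and check hypotheses,'' whereas yours is ``redo Moser directly in the present setting.'' Your approach is more self-contained and makes explicit where the relative compactness of $U$ is used (boundedness of $a(\b,\b)$ and uniformity of the doubling/Poincar\'e constants); the paper's approach is shorter but relies on the reader tracking down and matching the hypotheses of the cited theorems.
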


For a detailed proof, the reader may check the applicability of the more general results \cite[Theorem 1.2]{MR3118628} or \cite[Theorem 4.6]{1205.6493}.

\def\j{
\begin{proof}
It will suffice to check that the assumptions of \cite[Theorem 4.6]{1205.6493} hold in our context.
We use the fact that $U$ is relatively compact to see that the local volume-doubling inequality \eqref{DP} and the Poincar\'e inequality \eqref{PI} hold uniformly on $U$, 
and that $a(\b,\b)(x)\le\l^2$ for all $x\in U$ for some $\l\in[0,\infty)$.
By replacing $M$ by $U$, if necessary, it will suffice to consider the case where these inequalities hold uniformly on $M$ and where $\cL$ satisfies \eqref{SECT}.
Consider the Dirichlet form
$$
\cE(f,g)=-\int_M\cL fgd\nu=\int_Ma(\nabla f,\nabla g)d\nu-\int_Ma(\b,\nabla f)gd\nu.
$$
We take as model form
$$
\cE_0(f,g)=\int_Ma(\nabla f,\nabla g)d\nu
$$
with domain 
$$
\cF=\{f\in L^2(\nu):\|f\|_\cF<\infty\},\q\|f\|^2_\cF=\cE_0(f,f)+\int_Mf^2d\nu.
$$
Sturm \cite{SIII} showed that the volume-doubling inequality \eqref{DP} and the Poincar\'e inequality \eqref{PI} 
together imply the Sobolev inequality for $\cE_0$, which is Assumption 3 of \cite[Theorem 4.6]{1205.6493}.
It remains to verify Assumptions 0 and 1 of \cite[Theorem 4.6]{1205.6493}.
These hold for $\cL$ if and only if they hold for $\hat\cL$, so it will suffice to consider the case of $\cL$.
Consider the forms
$$
\cE^\sym(f,g)=\frac12(\cE(f,g)+\cE(g,f)),\q
\cE^\skew(f,g)=\frac12(\cE(f,g)-\cE(g,f))
$$
and
$$
\cE^s(f,g)=\cE^\sym(f,g)-\cE^\sym(fg,1).
$$
In the case at hand, we have
\begin{align*}
\cE^\sym(f,g)&=\int_Ma(\nabla f,\nabla g)d\nu+\frac12\int_Ma(\b,g\nabla f+f\nabla g)d\nu,\\
\cE^\skew(f,g)&=\frac12\int_Ma(\b,g\nabla f-f\nabla g)d\nu.
\end{align*} 
We use the uniform bound \eqref{SECT} and Cauchy--Schwarz in verifying the following
\begin{equation}\label{LOCD}
\text{$\cE$ is local and has domain $\cF$}.
\end{equation}
and
\begin{equation}\label{LOCE}
|\cE(f,g)|\le\l\|f\|_\cF\|g\|_\cF,\q
|\cE^\sym(fg,1)|\le\l\|f\|_\cF\|g\|_\cF.
\end{equation}
It is straightforward to check that
\begin{equation}\label{LOCF}
\cE^s=\cE_0
\end{equation}
and\footnote{See \cite[Definition 2.5]{1205.6493}.} 
\begin{equation}\label{LOCG}
\text{$\cE^\skew$ is a chain rule skew form with respect to $\cF$}.
\end{equation}
In properties \eqref{LOCD},\eqref{LOCE},\eqref{LOCF} and \eqref{LOCG}, we have checked that $\cE$ satisfies Assumption 0, with $C_*=\l$ and $C=1$.
Note that
\begin{align*}
\cE^\sym(f^2,1)&=\int_Ma(\b,\nabla f)fd\nu,\\
\cE^\skew(f,fg^2)&=-\int_Ma(\b,\nabla g)f^2gd\nu
\end{align*}
so, by \eqref{SECT} and Cauchy--Schwarz,
\begin{equation}\label{LOCH}
|\cE^\sym(f^2,1)|
\le2\l\left(\int_Ma(\nabla f,\nabla f)d\nu\right)^{1/2}\left(\int_Mf^2d\nu\right)^{1/2}
\end{equation}
and
\begin{equation}\label{LOCI}
|\cE^\skew(f,fg^2)|\le2\l
\left(\int_Ma(\nabla g,\nabla g)f^2d\nu\right)^{1/2}
\left(\int_Mf^2g^2d\nu\right)^{1/2}.
\end{equation}
In properties \eqref{LOCF},\eqref{LOCH} and \eqref{LOCI}, we have checked that $\cE$ satisfies Assumption 1, with
$$
C_1=1,\q C_2=\l^2,\q C_3=0,\q C_4=0,\q C_5=\l^2.
$$
\end{proof}
}

\section{Dual characterization of the sub-Riemannian distance}
In Riemannian geometry, the distance function has a well known dual characterization in terms of functions of sub-unit gradient.
Jerison \& Sanchez-Calle \cite{MR922334} 
showed that this extends to complete sub-Riemannian manifolds.
We now show that such a dual characterization holds without completeness, thus even in cases when minimal paths fail to exist.
At the same time we show an analogous dual characterization for the distances to and through a given closed set.

\begin{proposition}\label{DUAL}
For all $x,y\in M$, we have
\begin{equation}\label{DXY}
d(x,y)=\sup\{w(y)-w(x):w\in\cF\}
\end{equation}
where $\cF$ denotes the set of all locally Lipschitz functions $w$ on $M$ such that $a(\nabla w,\nabla w)\le1$ almost everywhere.
Moreover, for all $x,y\in M$ and any closed subset $A$ of $M$, we have\footnote{The superscripts $\pm$ indicate different functions, not the positive and negative parts of a single function.}
\begin{equation}\label{DXK}
d(x,A,y)=\sup\{w^+(y)-w^-(x):w^-,w^+\in\cF\text{ with $w^+=w^-$ on $A$}\}
\end{equation}
and
\begin{equation}\label{DK}
d(x,A)=\sup\{w(x):w\in\cF\text{ with $w=0$ on $A$}\}.
\end{equation}
\end{proposition}
\begin{proof}
The first assertion \eqref{DXY} is the case $A=M$ of \eqref{DXK}.
Denote the right hand sides of \eqref{DXK} and \eqref{DK} by $\d(x,A,y)$ and $\d(x,A)$ for now.
First we will show that $\d(x,A,y)\le d(x,A,y)$.
Let $\o\in\oxy$ and suppose that $\o$ is absolutely continuous with driving path $\xi$ and that $\o_t\in A$ for some $t\in[0,1]$.
Let $w^-,w^+\in\cF$, with $w^+=w^-$ on $A$.
It will suffice to consider the case where 
$\o|_{[0,t]}$ 
and
$\o|_{[t,1]}$ 
are simple (injective) and then to choose relatively compact charts $U_0$ and $U_1$ for $M$ containing 
$\o|_{[0,t]}$ 
and
$\o|_{[t,1]}$ 
respectively.
Then, given $\ve>0$, since $a$ is continuous, for $i=1,2$, we can find $\cinf$ functions $f_i^-,f_i^+$ on $U_i$ such that $|f_i^\pm(z)-w^\pm(z)|\le\ve$ and $a(\nabla f_i^\pm,\nabla f_i^\pm)(z)\le1+\ve$ for all $z\in U_i$.
Then
$$
w^+(y)-w^-(x)=w^+(y)-w^+(\o_t)+w^-(\o_t)-w^-(x)\le f_1^+(y)-f_1^+(\o_t)+f_0^-(\o_t)-f_0^-(x)+4\ve
$$
and
\begin{align*}
&f_1^+(y)-f_1^+(\o_t)+f_0^-(\o_t)-f_0^-(x)\\
&\q\q=\int_0^t\<\nabla f_0^-(\o_s),\dot\o_s\>ds+\int_t^1\<\nabla f_1^+(\o_s),\dot\o_s\>ds\\
&\q\q=\int_0^t\<\nabla f_0^-(\o_s),a(\o_s)\xi_s\>ds+\int_t^1\<\nabla f_1^+(\o_s),a(\o_s)\xi_s\>ds\\
&\q\q\le\left(\int_0^ta(\nabla f_0^-,\nabla f_0^-)(\o_s)ds+ \int_t^1a(\nabla f_1^+,\nabla f_1^+)(\o_s)ds\right)^{1/2}\left(\int_0^1a(\xi_s,\xi_s)ds\right)^{1/2}\\
&\q\q\le\sqrt{(1+\ve)I(\o)}.
\end{align*}
Hence $w^+(y)-w^-(x)\le\sqrt{I(\o)}$.
On taking the supremum over $w^\pm$ and the infimum over $\o$, we deduce that 
\begin{equation}\label{DDE}
\d(x,A,y)\le d(x,A,y).
\end{equation}
For $w\in\cF$ with $w=0$ on $A$ and for $y\in A$, we can take $w^-=-w$ and $w^+=0$ in \eqref{DXK} to see that
$\d(x,A)\le\d(x,A,y)$.
Hence, on taking the infimum over $y\in A$ in \eqref{DDE}, we obtain
$$
\d(x,A)\le d(x,A).
$$

Now we prove the reverse inequalities.
Consider a $\cinf$ symmetric bilinear form $\bar a$ on $T^*M$ such that $\bar a\ge a$ and $\bar a$ is everywhere positive-definite.
Write $\bar I$ for the associated energy function and write $\bar d$ and $\bar\d$ for the distance functions 
obtained by replacing $a$ by $\bar a$ in the definitions of $d$ and $\d$.
Set 
$$
w^+(z)=\bar d(x,A,z),\q w^-(z)=\bar d(x,z),\q w(x)=\bar d(x,A).
$$
Note that $w^+=w^-$ and $w=0$ on $A$.
Since $\bar a$ is positive-definite, the functions $w^-$, $w^+$ and $w$ are locally Lipschitz, 
and their weak gradients $\nabla w^\pm$ and $\nabla w$ satisfy, almost everywhere,
$$
\bar a(\nabla w^\pm,\nabla w^\pm)\le1,\q 
\bar a(\nabla w,\nabla w)\le1.
$$
Hence 
\begin{align*}
\bar d(x,A,y)&=w^+(y)-w^-(x)\le\bar\d(x,A,y)\le\d(x,A,y),\\
\bar d(x,A)&=w(x)\le\bar\d(x,A)\le\d(x,A).
\end{align*}
We will show in Lemma \ref{LEM} below that, 
for all $\ve>0$ and all $d^*\in[1,\infty)$, we can choose $\bar a$ so that, 
for all $x,y\in M$ with $d(x,y)\le d^*$,
$$
d(x,y)\le\bar d(x,y)+\ve.
$$
Then, for this choice of $\bar a$, we have also, for all closed sets $A$ with $d(x,A,y)\le d^*-1$,
$$
d(x,A,y)\le\bar d(x,A,y)+2\ve,\q d(x,A)\le\bar d(x,A)+\ve.
$$
Since $\ve$ and $d^*$ are arbitrary, this completes the proof.
\end{proof}

The idea in the following lemma is as follows.
While we have no control over the behaviour of $a$ near $\infty$, neither do we have any constraint on how small we can choose
$\bar a-a$ near $\infty$. 
Given $\ve>0$, this will allow us to choose $\bar a$ so that, for any path $\bar\g\in\oxy$ with $\bar I(\bar\g)<\infty$, 
we can construct another path $\g\in\oxy$ with $I(\g)\le\bar I(\bar\g)+\ve$.

\begin{lemma}\label{LEM}
For all $\ve>0$ and all $d^*\in[1,\infty)$, 
there is a $\cinf$ positive-definite symmetric bilinear form $\bar a$ on $T^*M$ with $\bar a\ge a$ such that
for all $x,y\in M$ with $d(x,y)\le d^*$, we have
$$
d(x,y)\le\bar d(x,y)+\ve.
$$
\end{lemma}
\begin{proof}
It will be convenient to fix $\cinf$ vector fields $Y_1,\dots,Y_p$ on $M$ which span the tangent space at every point, so that
$$
a_0(x)=\sum_{i=1}^pY_i(x)\otimes Y_i(x)
$$
is a positive-definite symmetric bilinear form on $T^*M$.
There exists an exhaustion of $M$ by open sets $(U_n:n\in\N)$, such that $U_n$ is compactly contained in $U_{n+1}$ for all $n$.
Set $U_0=\es$.
Let $(\d_n:n\in\N)$ be a sequence of constants, such that $\d_n\in(0,1]$ for all $n$, to be determined. 
There exists a positive $\cinf$ function $f$ on $M$ such that $f\le\d_n$ on $M\sm U_{n-2}$ for all $n$.
We take $\bar a=a+f^2a_0$.
Write $d_0$ and $I_0$ for the distance and energy functions associated with $a+a_0$.
Recall that we write $\bar d$ and $\bar I$ for the distance and energy functions associated with $\bar a$.
Then $d_0\le\bar d\le d$.
Set $\ve_n=d_0(\pd U_n,\pd U_{n+1})$.
By the sub-Riemannian distance estimate \eqref{DIC}, there are constants $\a_n\in(0,1]$ and $C_n<\infty$,
depending only on $n$ and on the open sets $(U_n:n\in\N)$ and the vector fields $X_1,\dots,X_m$ and $Y_1,\dots,Y_p$,
such that, for all $x,y\in U_{n+2}$,
$$
d(x,y)\le C_nd_0(x,y)^{\a_n}.
$$
Fix a constant $d^*\in[1,\infty)$.
Fix $x,y\in M$ with $d(x,y)\le d^*$ and suppose that $\o\in\oxy$ satisfies $\bar I(\o)\le d^{*2}$.
There exist absolutely continuous paths $h:[0,1]\to\R^m$ and $k:[0,1]\to\R^p$ such that, for almost all $t$,
$$
\dot\o_t=\sum_{\ell=1}^mX_\ell(\o_t)\dot h_t^\ell+\sum_{i=1}^pf(\o_t)Y_i(\o_t)\dot k_t^i
$$
and 
$$
\int_0^1|\dot h_t|^2dt+\int_0^1|\dot k_t|^2dt=\bar I(\o).
$$
By reparametrizing $\o$ if necessary, we may assume that $|\dot h_t|^2+|\dot k_t|^2=\bar I(\o)$ for almost all $t$.
Consider for now the case where $\o_t\in U_{n+1}\sm U_{n-1}$ for all $t$ for some $n$ and define a new path $\g$ by
$$
\dot\g_t=\sum_{\ell=1}^mX_\ell(\g_t)\dot h_t^\ell,\q \g_0=x.
$$
Then $I(\g)\le\bar I(\o)$.
By Gronwall's lemma, there is a constant $A_n\in[1,\infty)$,
depending only on $n$ and on the open sets $(U_n:n\in\N)$ and the vector fields $X_1,\dots,X_m$ and $Y_1,\dots,Y_p$,
such that
$$
d_0(\g_1,y)\le d^*A_n\d_n
$$
provided that 
\begin{equation}\label{GRON}
d^*A_n\d_n\le\ve_{n-2}\wedge\ve_{n+1}.
\end{equation}
We will ensure that \eqref{GRON} holds, and hence that $\g_1\in U_{n+2}$.
Then 
$$
d(x,y)\le d(x,\g_1)+d(\g_1,y)\le\sqrt{\bar I(\o)}+C_n d_0(\g_1,y)^{\a_n}\le\sqrt{\bar I(\o)}+C_nd^*A_n\d_n^{\a_n}.
$$

We return to the general case.
Then 
there is an integer $k\ge1$
and there is a sequence of times $t_0\le t_1\le\dots\le t_k$ 
and there is a sequence of positive integers $n_1,\dots,n_k$
such that 
$t_0=0$, $t_k=1$, 
and $|n_{j+1}-n_j|=1$ and $\o_{t_j}\in\pd U_{n_{j+1}}$ for $j=1,\dots,k-1$, 
and 
$$
\o_t\in\bar U_{n_j+1}\sm U_{n_j-1}
$$
for all $t\in[t_{j-1},t_j]$ and all $j=1,\dots,k$, and, if $k\ge2$, $\o_t\in\pd U_{n_1}$ for some $t\in[t_0,t_1]$.
Set
$$
S_n=\{t_j:j\in\{1,\dots,k-1\}\text{ and }n_{j+1}=n\},\q \chi_n=|S_n|.
$$
Since $\o$ must hit either $\pd U_{n+1}$ or $\pd U_{n-1}$ immediately prior to any time in $S_n$, we have
$$
(\ve_{n-1}\wedge\ve_n)\chi_n\le d^*.
$$
We have shown that
$$
d(\o_{t_{j-1}},\o_{t_j})\le(t_j-t_{j-1})
\sqrt{\bar I(\o)}+C_{n_j}d^*A_{n_j}\d_{n_j}^{\a_{n_j}}
$$
so
$$
d(x,y)\le\sum_{j=1}^kd(\o_{t_{j-1}},\o_{t_j})
\le\sqrt{\bar I(\o)}+C_{n_1}d^*A_{n_1}\d_{n_1}^{\a_{n_1}}+
\sum_{n=1}^\infty C_nd^*A_n\chi_n\d_n^{\a_n}.
$$
Now we can choose the sequence $(\d_n:n\in\N)$ so that \eqref{GRON} holds and
$$
2\sum_{n=1}^\infty\frac{C_nd^{*2}A_n\d_n^{\a_n}}{\ve_{n-1}\wedge\ve_n}\le\ve.
$$
Then, on optimizing over $\o$, we see that $d(x,y)\le\bar d(x,y)+\ve$ whenever $d(x,y)\le d^*$, as required.
\end{proof}

\section{Gaussian-type upper bounds}
Recall from Section \ref{NPD} the notions of distance and heat kernel through a given closed set $A$.
\begin{proposition}\label{HKUB}
Let $\cL$ be given as in equation \eqref{CFO} and suppose that $\cL$ satisfies \eqref{SECT}.
Then there is a continuous function $C:M\times M\to(0,\infty)$ such that, 
for all $x,y\in M$ and all $t\in(0,\infty)$, for 
$$
r=\min\left\{\frac t{d(x,y)},\sqrt{\frac t4},\frac{d(x,\infty)}4,\frac{d(y,\infty)}4
\right\}
$$
we have
\begin{equation}\label{PTXY}
p(t,x,y)\le\frac{C(x,y)}{\sqrt{\nu(B(x,r))}\sqrt{\nu(B(y,r))}}
\exp\left\{-\frac{d(x,y)^2}{2t}+\frac{\l^2t}2\right\}.
\end{equation}
Moreover, for any closed set $A=M\sm D$ in $M$, there is a continuous function
$C(.,.,A):D\times D\to(0,\infty)$ such that,
for all $x,y\in D$ and all $t\in(0,\infty)$, for 
$$
r=\min\left\{\frac t{d(x,A,y)},\sqrt{\frac t4},\frac{r(x,A)}4,\frac{r(y,A)}4 \right\},\q 
r(x,A)=\min\{d(x,\infty),d(x,A)\}
$$
we have
\begin{equation}\label{PTXKY}
p(t,x,A,y)\le\frac{C(x,y,A)}{\sqrt{\nu(B(x,r))}\sqrt{\nu(B(y,r))}}
\exp\left\{-\frac{d(x,A,y)^2}{2t}+\frac{\l^2t}2\right\}.
\end{equation}
\end{proposition}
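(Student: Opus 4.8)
The plan is to run the Davies--Gaffney weighted-energy argument for a non-symmetric operator, in the spirit of Sturm, but organised so that the only genuinely global ingredient is an $L^2$-estimate which requires no completeness, and then to pass from $L^2$ to a pointwise bound using the parabolic mean-value inequality, Proposition \ref{MVE}. The sector condition \eqref{SECT} will enter precisely where it does in Sturm's framework, namely to dominate the antisymmetric part of the associated form and to leave behind the additive correction $\l^2 t/2$; the dual description of the distance, Proposition \ref{DUAL}, is what supplies the admissible weights --- for \eqref{PTXY} the function $z\mapsto d(x,z)$, and for \eqref{PTXKY} a matching pair $w^-,w^+\in\cF$ with $w^+=w^-$ on $A$ and $w^+(y)-w^-(x)$ arbitrarily close to $d(x,A,y)$ --- whose sub-unit $a$-gradient is exactly what yields the sharp constant $1/(2t)$ in front of $d(x,y)^2$, respectively $d(x,A,y)^2$.

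For the $L^2$-step I would fix $\xi\in\R$ and, after truncation, a bounded $w$ with $a(\nabla w,\nabla w)\le1$ almost everywhere, and differentiate $F(s)=\int_M(P_sf)^2e^{2\xi w}\,d\nu$ along the heat flow of $\cL$. Writing $v=e^{\xi w}P_sf$ and integrating by parts using the form \eqref{CFO}, the symmetric part contributes $-\int_M a(\nabla v,\nabla v)\,d\nu$ plus a term $\le\xi^2 F(s)$, and the drift term is controlled by the Cauchy--Schwarz inequality for $a$ together with $a(\b,\b)\le\l^2$; the first of these dominates the cross terms coming from the drift. To get the sharp leading constant one replaces the weight $\xi w$ by a suitable, in general time-dependent, weight $w_s$ satisfying a Hamilton--Jacobi-type inequality adapted to $\l$; the net effect of $\l$ is a shift in the effective distance which, evaluated at the endpoint, accounts for $\exp(\l^2 t/2)$. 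This yields the required off-diagonal $L^2$-estimate (equivalently, a Davies--Gaffney bound $\|\mathbf 1_{B_2}P_t\mathbf 1_{B_1}\|_{2\to2}\le\exp\big(-\rho^2/2t+\l^2 t/2\big)$ for sets at distance $\rho$), and the same argument applies with $\cL$ replaced by $\hat\cL$, which is needed because $p(t,x,y)$ solves the heat equation for $\cL$ in $x$ and for $\hat\cL$ in $y$.

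To upgrade to \eqref{PTXY} I would write $p(t,x,y)=\int_M p(t/2,x,z)\,p(t/2,z,y)\,\nu(dz)$ and split the product so that the first factor carries a weight growing towards $y$ and the second a weight growing towards $x$, arranged to meet in the middle with combined value $\approx d(x,y)^2/t$; applying the $L^2$-estimate to each half then produces the constant $d(x,y)^2/(2t)$, while the residual weighted $L^2$-masses near $x$ and near $y$ are bounded by on-diagonal estimates. Proposition \ref{MVE}, applied on the cylinders $(t/2-r^2,t/2)\times B(x,r)$ and $(t/2-r^2,t/2)\times B(y,r)$, then converts these $L^2$-averages into the factors $\nu(B(x,r))^{-1/2}$ and $\nu(B(y,r))^{-1/2}$. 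The particular form of $r$ is dictated at this stage: $r^2\le t/2$ is the hypothesis of Proposition \ref{MVE} (hence the term $\sqrt{t/4}$), the constraints $r\le d(x,\infty)/4$ and $r\le d(y,\infty)/4$ ensure that $B(x,2r)$ and $B(y,2r)$ are relatively compact (again required by Proposition \ref{MVE}), and $r\le t/d(x,y)$ is forced by balancing the slope of the weight against the size of the localising cut-offs so that their cross terms stay $O(1)$. Continuity of $C(\cdot,\cdot)$ follows because on any compact set the constants in \eqref{DIC}, \eqref{DP} and \eqref{PI}, and hence in Proposition \ref{MVE}, may be taken uniform.

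For \eqref{PTXKY} the extra ingredient is the first hitting time $T$ of $A$: by the strong Markov property $p(t,x,A,y)=\E_x\big[\mathbf 1_{\{T<t\}}\,p(t-T,B_T,y)\big]$, and on $\{T<t\}$ one has $B_T\in A$, so $d(x,B_T)+d(B_T,y)\ge d(x,A,y)$. I would feed \eqref{PTXY} into the factor $p(t-T,B_T,y)$, combine it with a bound of the same Gaussian type for the passage from $x$ to $A$, which the identical weighted-energy argument delivers using the weight $z\mapsto d(x,z)$ --- the matching condition $w^+=w^-$ on $A$ in \eqref{DXK} being exactly what allows the two weighted estimates to be composed --- and optimise the splitting $s+(t-s)=t$ using $\frac{u^2}{2s}+\frac{v^2}{2(t-s)}\ge\frac{(u+v)^2}{2t}$; this gives \eqref{PTXKY}, with $d(x,\infty)$ replaced by $r(x,A)=\min\{d(x,\infty),d(x,A)\}$ because the localisation must also stay clear of $A$. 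The hardest part will be the bookkeeping of the localising cut-offs in the second and fourth steps: one must keep every error term controlled while the balls fed to Proposition \ref{MVE} remain relatively compact, and this is what forces the precise shape of $r$; a secondary subtlety is the choice of the time-dependent weight needed to obtain the constant $1/(2t)$ together with exactly $\l^2 t/2$ rather than $(1-\ve)/(2t)$ with a $\ve$-dependent additive term.
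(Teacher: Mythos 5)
Your treatment of the first bound \eqref{PTXY} follows the same weighted--energy scheme as the paper, though the paper runs the parabolic mean-value inequality directly on the weighted $L^2$-estimate at $x$ (and then at $y$ for $\hat\cL$) rather than via a Chapman--Kolmogorov splitting at $t/2$, which would also work but costs extra bookkeeping to recover the sharp constant. One small correction: no time-dependent, Hamilton--Jacobi-type weight is needed here. The paper takes the fixed weight $\psi=\th w$ with $a(\nabla w,\nabla w)\le 1$, obtains the Gronwall rate $\rho\le(\l+\th)^2=\l^2+2\l\th+\th^2$, and simply optimises $\th=d(x,y)/t$ at the end; the $\l^2t/2$ and the extraneous $e^{\l d(x,y)}$ factor absorbed into $C(x,y)$ come out cleanly from this, and the $1/(2t)$ is exact. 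Your explanation of the origin of $\l^2t/2$ (the drift term) is right; the extra machinery is a red herring.

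For \eqref{PTXKY}, however, your route diverges essentially from the paper's and has a genuine gap. The paper does \emph{not} use the strong Markov decomposition $p(t,x,A,y)=\E_x[\mathbf 1_{\{T<t\}}p(t-T,B_T,y)]$; it uses a reflection trick. It builds a doubled space $\tilde M=M^-\cup M^+$ with $M^\pm=A\cup D^\pm$, $D^\pm$ disjoint copies of $D=M\sm A$, and defines $\tilde p(t,\cdot,\cdot)$ so that $p(t,x,A,y)=\tilde p(t,x^-,y^+)$. A pair $(w^-,w^+)\in\cF\times\cF$ with $w^-=w^+$ on $A$ then glues to a single admissible weight $w$ on $\tilde M$, and one runs exactly one weighted $L^2$-estimate on $\tilde M$, followed by the mean-value inequality for $\cL$ at $x^-$ and for $\hat\cL$ at $y^+$. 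So the role of the matching condition is to make $w$ continuous across the seam $A$, not (as you suggest) to compose two separate estimates. Your Markov-property route, by contrast, is exactly how the paper proves the \emph{weaker} bound \eqref{HSU} in Theorem \ref{HTK}, and it only gives the exponent $(d(x,A)+d(y,A))^2/2$, and only under the assumption that $M\sm A$ is relatively compact. Two obstructions stop it from reaching \eqref{PTXKY} for a general closed $A$: first, to keep the sharper $d(x,A,y)^2$ you would need a Gaussian bound on the joint law $\PP_x(T\in ds,\,B_T\in dz)$ that tracks $d(x,z)$ for the hitting point $z$, which the weighted-energy argument does not directly supply (it controls the distribution function $\PP_x(T\le s)$ via $d(x,A)$ only, as in Proposition \ref{HITEST}); second, feeding \eqref{PTXY} into $p(t-T,B_T,y)$ requires uniform control of $C(z,y)$ and of $\nu(B(z,r))$ over $z\in\pd A$, which fails when $\pd A$ is non-compact --- and Proposition \ref{HKUB} makes no compactness assumption on $D=M\sm A$. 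The reflection trick side-steps both issues because the intermediate points of $A$ never appear: the only places where mean-value inequalities or volume factors are invoked are the fixed endpoints $x$ and $y$.
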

The statements above remain true with the constant $4$ replaced by $2$, by the local volume-doubling inequality. 
The value $4$ will be convenient for the proof.
The idea of the proof is to combine a standard argument for heat kernel upper bounds with a reflection trick. 
In terms of Markov processes, we give a random sign to each excursion of the diffusion process into $D$, viewing it as taking values in $D^-$ or $D^+$.
Then a generalization of the classical reflection principle for Brownian motion allows to express the density for paths from $x$ to $y$ via $A$ in terms of this enhanced process.
In fact the heat kernel $\tilde p$ for this process may be written in terms of $p$ and $p_D$, and we find it technically simpler to define $\tilde p$ in those terms, rather than set up the enhanced process.
\begin{proof}
We omit the proof of \eqref{PTXY}, which is a simpler version of the proof of \eqref{PTXKY}.
For \eqref{PTXKY}, we will show that the argument used in 
\cite[Theorem 1.2]{MR1484769}, 
for the case where $a$ is positive-definite and $\b=0$, generalizes to the present context.
Consider the set $\tilde M=M^-\cup M^+$, where $M^\pm=A\cup D^\pm$ and $D^-,D^+$ are disjoint copies of $D=M\sm A$. 
Write $\pi$ for the obvious projection $\tilde M\to M$.
For functions $f$ defined on $M$, we will write $f$ also for the function $f\circ\pi$ on $\tilde M$.
Thus we will sometimes consider $a$ as a symmetric bilinear form on $T^*D^\pm$ and $\b$ as a $1$-form on $D^\pm$.
Define a Borel measure $\tilde\nu$ on $\tilde M$ by
$$
\tilde\nu(B)=\nu(B\cap A)+\tfrac12\nu(\pi(B\cap D^-))+\tfrac12\nu(\pi(B\cap D^+)).
$$
Note that $\nu=\tilde\nu\circ\pi^{-1}$.
Now define
$$
\tilde p(t,x,y)=
\begin{cases}
p(t,x,y)+p_D(t,x,y),&\text{ if }x,y\in D^\pm,\\
p(t,x,y)-p_D(t,x,y),&\text{ if }x\in D^\pm\text{ and }y\in D^\mp,\\
p(t,x,y),&\text{ if }x\in A\text{ or }y\in A.
\end{cases}
$$
Given bounded measurable functions $f^-,f^+$ on $M$ with $f^-=f^+$ on $A$, write $f$ for the function on $\tilde M$ such that $f=f^\pm\circ\pi$ on $M^\pm$,
and set $\bar f=(f^-+f^+)/2$ and $f^D=(f^+-f^-)/2$.
Let $\phi^-$ and $\phi^+$ be $\cinf$ functions on $M$, of compact support, with $\phi^-=\phi^+$ on $A$ and define $\phi$ on $\tilde M$ and $\bar\phi$ and $\phi^D$ on $M$ similarly.
For $t\in(0,\infty)$, define functions $u_t$ on $\tilde M$, $\bar u_t$ on $M$ and $u^D_t$ on $D$ by
$$
u_t(x)=\int_{\tilde M}\tilde p(t,x,y)f(y)\tilde\nu(dy)
$$
and
$$
\bar u_t(x)=\int_Mp(t,x,y)\bar f(y)\nu(dy),\q u^D_t(x)=\int_Mp_D(t,x,y)f^D(y)\nu(dy).
$$
Then $\bar u_t$ and $u^D_t$ solve the heat equation with Dirichlet boundary conditions in $M$ and $D$ respectively.
It is straightforward to check that $u_t=u^\pm_t\circ\pi$ on $M^\pm$, where $u_t^\pm=\bar u_t\pm u_t^D$ and we extend $u_t^D$ by $0$ on $A$.
Hence
$$
\int_{\tilde M}\phi u_td\tilde\nu=\int_M\bar\phi\bar u_td\nu+\int_D\phi^Du_t^Dd\nu
$$
and so
\begin{align}
\notag
&\frac{d}{dt}\int_{\tilde M}\phi u_td\tilde\nu
=\frac{d}{dt}\int_M\bar\phi\bar u_td\nu+\frac{d}{dt}\int_D\phi^D u_t^Dd\nu\\
\notag
&=-\frac12\int_Ma(\nabla\bar\phi,\nabla\bar u_t)d\nu+\int_Ma(\bar\phi\b,\nabla\bar u_t)d\nu
-\frac12\int_Da(\nabla\phi^D,\nabla u_t^D)d\nu+\int_Da(\phi^D\b,\nabla u_t^D)d\nu\\
\label{WHE}
&=-\frac12\int_{\tilde M}a(\nabla\phi,\nabla u_t)d\tilde\nu+\int_{\tilde M}a(\phi\b,\nabla u_t)d\tilde\nu.
\end{align}
Let $(w^-,w^+)$ be a pair of bounded locally Lipschitz functions on $M$ such that 
$w^-=w^+$ on $A$ and $a(\nabla w^\pm,\nabla w^\pm)\le1$ almost everywhere.
Define a function $w$ on $\tilde M$ by setting $w=w^\pm\circ\pi$ on $M^\pm$.
Fix $\th\in(0,\infty)$ and set $\psi=\th w$.
We deduce from \eqref{WHE} by a standard argument that
\begin{align*}
\frac d{dt}\int_{\tilde M}(e^{-\psi}u_t)^2d\tilde\nu
&=-\int_{\tilde M}a(\nabla(e^{-2\psi}u_t),\nabla u_t)d\tilde\nu+2\int_{\tilde M}a(\b e^{-2\psi}u_t,\nabla u_t)d\tilde\nu\\
&=-\int_{\tilde M}a(\nabla u_t,\nabla u_t)e^{-2\psi}d\tilde\nu+2\int_{\tilde M}a((\b+\nabla\psi)u_t,\nabla u_t)e^{-2\psi}d\tilde\nu\\
&\le\int_{\tilde M}a(\b+\nabla\psi,\b+\nabla\psi)(e^{-\psi}u_t)^2d\tilde\nu\le \rho\int_{\tilde M}(e^{-\psi}u_t)^2d\tilde\nu
\end{align*}
where
$$
\rho=\|a(\b+\nabla\psi,\b+\nabla\psi)\|_\infty\le(\l+\th)^2.
$$
Here we have used condition \eqref{SECT}. 
Then, by Gronwall's inequality, 
\begin{equation}\label{GR}
\int_{\tilde M}(e^{-\psi}u_t)^2d\tilde\nu\le e^{\rho t}\int_{\tilde M}(e^{-\psi}f)^2d\tilde\nu.
\end{equation}

There exists a locally finite cover $\cU$ of $D$ by sets of the form $B(x,r(x,A)/4)$,
where we recall that $r(x,A)=\min\{d(x,\infty),d(x,A)\}$.
For $U=B(x,r(x,A)/4)\in\cU$, set $\tilde U=B(x,7r(x,A)/8)$.
Then $\tilde U$ is a relatively compact open subset of $D$.
By the triangle inequality, for all $U\in\cU$ and all $x\in U$, we have $B(x,r(x,A)/2)\sse\tilde U$.
Fix $U,V\in\cU$ and write $C(U)$ and $C(V)$ for the constants appearing in the parabolic 
mean-value inequality for $\cL$ on $\tilde U$ and for $\hat\cL$ on $\tilde V$.
(See Proposition \ref{MVE}.)
Fix $x\in U$, $y\in V$ and $t\in(0,\infty)$, and recall that we set
$$
r=\min\left\{\frac t{d(x,A,y)},\sqrt{\frac t4},\frac{r(x,A)}4,\frac{r(y,A)}4\right\}.
$$
Write $x^-$ and $y^+$ for the unique points in $D^-$ and $D^+$ respectively such that $\pi(x^-)=x$ and $\pi(y^+)=y$.
Set
$$
B^-=\{z\in D^-:\pi(z)\in B(x,r)\},\q B^+=\{z\in D^+:\pi(z)\in B(y,r)\}.
$$
Take $f^-=0$ and choose $f^+\ge0$ supported on $B(y,r)$ and such that $\int_M(f^+)^2d\nu=2$.
Then $\int_{\tilde M}f^2d\tilde\nu=1$.
Note that $w\le w^-(x)+r$ on $B^-$ and $w\ge w^+(y)-r$ on $B^+$.
Hence we obtain from \eqref{GR}, for all $s\ge0$,
\begin{equation}\label{LTES}
e^{-2\th(w^-(x)+r)}\int_{B^-}u_s^2d\tilde\nu\le e^{\rho s}e^{-2\th(w^+(y)-r)}.
\end{equation}

Since $u^-_t\ge0$ and $(\pd/\pd t)u_t^-=\cL u_t^-$ on $(0,\infty)\times D$, by the parabolic mean-value inequality, 
for all $\t\in(0,\infty)$ such that $r^2\le\t/2$,
\begin{equation}\label{PMVE}
u_\t(x^-)^2
\le C(U)\fint_{\t-r^2}^\t\fint_{B^-}u_s^2d\tilde\nu ds
\le C(U)\nu(B(x,r))^{-1}e^{-2\th(w^+(y)-w^-(x)-2r)+\rho\t}.
\end{equation}
Set $v_s(z)=p(s,x,A,z)$, then $v_s\ge0$ and $(\pd/\pd s)v_s=\hat\cL v_s$ on $(0,\infty)\times D$.
By the parabolic mean-value inequality again, 
$$
p(t,x,A,y)^2\le 
C(V)\fint_{t-r^2}^t\fint_{B(y,r)}p(s,x,A,z)^2\nu(dz)ds
=C(V)\fint_{t-r^2}^t\fint_{B^+}\tilde p(s,x^-,z)^2\tilde\nu(dz)ds.
$$
Recall that $r^2\le t/4$.
For each $s\in[t-r^2,t]$, we can take $f^+=cp(s,x,A,.)1_{B(y,r)}$, where $c$ is chosen so that $\int_{\tilde M}f^2d\tilde\nu=1$.
For this choice of $f^+$, we have
$$
u_s(x^-)
=\int_{\tilde M}\tilde p(s,x^-,z)f(z)\tilde\nu(dz)
=c\int_{B^+}\tilde p(s,x^-,z)^2\tilde\nu(dz)
=\int_{B^+}\tilde p(s,x^-,z)^2\tilde\nu(dz)
$$
so
$$
u_s(x^-)^2
=c^2\left(\int_{B^+}\tilde p(s,x^-,z)^2\tilde\nu(dz)\right)^2
=\int_{B^+}\tilde p(s,x^-,z)^2\tilde\nu(dz).
$$
Hence
$$
p(t,x,A,y)^2
\le\frac{C(V)}{\nu(B(y,r))}\fint_{t-r^2}^tu_s(x^-)^2ds
\le\frac{C(U)C(V)}{\nu(B(x,r))\nu(B(y,r))}e^{-2\th(w^+(y)-w^-(x)-2r)+\rho t}.
$$
Here, we applied \eqref{PMVE} with $\t=s$, noting that $s\ge3t/4$, so $r^2\le t/4\le s/2$.
We optimize over $(w^-,w^+)$ and take $\th=d(x,A,y)/t$ to obtain
$$
p(t,x,A,y)\le\frac{C(U,V,x,y)}{\sqrt{\nu(B(x,r))}\sqrt{\nu(B(y,r))}}
\exp\left\{-\frac{d(x,A,y)^2}{2t}+\frac{\l^2t}2\right\}
$$
where
$$
C(U,V,x,y)=e^{2+\l d(x,A,y)/2}\sqrt{C(U)C(V)}
$$
and we used the fact that $r\le d(x,A,y)/t$.
Finally, since $\cU$ is locally finite, there is a continuous function $C(.,.,A):D\times D\to(0,\infty)$ such that
$C(U,V,x,y)\le C(x,y,A)$ for all $U,V\in\cU$ and all $x\in U$ and $y\in V$.
\end{proof}

\begin{proposition}\label{HITEST}
Let $\cL$ be given as in equation \eqref{CFO}.
Let $D$ be a relatively compact open set in $M$ and set $A=M\sm D$.
There is a constant $C=C(D)<\infty$ with the following property.
For all $x\in D$ and all $t\in(0,\infty)$, and for $r=t/d(x,A)$,
\begin{equation}\label{HITR}
p(t,x,A)\le\frac C{\sqrt{\nu(B(x,r))}}\exp\left\{-\frac{d(x,A)^2}{2t}\right\}.
\end{equation}
\end{proposition}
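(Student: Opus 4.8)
The plan is to run a Davies--Gaffney weighted $L^2$-estimate for the function $g_s:=p(s,\cdot,A)$ and then turn it into a pointwise bound at $x$ via the parabolic mean-value inequality, Proposition~\ref{MVE}. Several reductions come first. Since $U$ is relatively compact, $\l_U:=\|a(\b,\b)^{1/2}\|_{L^\infty(U)}<\infty$, and this local sector constant is all the argument uses, so \eqref{SECT} is not needed. The function $g_s(z)=\PP_z(T\le s)=1-\int_U p_U(s,z,y)\,\nu(dy)$ is, by $\cL1=0$, a non-negative bounded weak solution of $\partial_s g_s=\cL g_s$ on $(0,\infty)\times U$ with $g_s=1$ on $\partial U$, $g_0\equiv0$, and $g_s$ non-decreasing in $s$; in particular Proposition~\ref{MVE} applies to $g$. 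It suffices to treat $t$ small compared with $d(x,A)^2$ (for larger $t$ the estimate is far from sharp and reduces to $p(t,x,A)\le1$, possibly after capping $r$ as in Proposition~\ref{HKUB}), and, using the monotonicity $p(t,x,A)\le p(t,x,\tilde A)$ whenever $\tilde A\supseteq A$, we may enlarge the complement $A$ slightly, i.e.\ replace $U$ by a relatively compact open $\tilde U$ with $\cinf$ boundary compactly contained in $U$ and containing $B(x,d(x,A)-\ve)$; the constants produced below for $\tilde U$ are controlled in terms of $U$, and $d(x,\tilde A)\ge d(x,A)-\ve$, so letting $\ve\downarrow0$ at the end recovers the statement for $U$. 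We therefore assume that $U$ has smooth boundary and $t\le d(x,A)^2/2$, so that $r=t/d(x,A)$ satisfies $r^2\le t/2$ and $B(x,2r)\subseteq B(x,d(x,A))\subseteq U$, as Proposition~\ref{MVE} requires.

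Fix $\th>0$ and, by Proposition~\ref{DUAL} (e.g.\ $w=\bar d(\cdot,A)$ for a smooth positive-definite $\bar a\ge a$ close enough to $a$), a locally Lipschitz $w$ with $a(\nabla w,\nabla w)\le1$ a.e., $w=0$ on $A$, and $w(x)\ge d(x,A)-\ve$; recall such $w$ is $1$-Lipschitz for $d$ (Proposition~\ref{DUAL}). Put $\Phi(s):=\int_U e^{2\th w}g_s^2\,d\nu$. Differentiating, using $\cL f=\tfrac12\dvv(a\nabla f)+a(\b,\nabla f)$, integrating by parts on the smooth domain $U$, and completing the square exactly as in the display preceding \eqref{GR} (but now picking up a boundary term, since we are on $U$), one gets $\Phi'(s)\le(\l_U+\th)^2\,\Phi(s)+b(s)$, where $b(s)=\int_{\partial U}(a\nabla g_s)\!\cdot\!n\,d\sigma$ is the outward flux of $a\nabla g_s$ across $\partial U$ (here $w=0$ and $g_s=1$ on $\partial U$ have been used).

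The heart of the matter is that $b(s)$, though non-trivial, is innocuous. By the divergence theorem and $\cL g_s=\partial_s g_s$ one has $b(s)=\int_U\dvv(a\nabla g_s)\,d\nu=2\,\partial_s\!\int_U g_s\,d\nu-2\!\int_U a(\b,\nabla g_s)\,d\nu$. Applying Gronwall to $\Phi'\le(\l_U+\th)^2\Phi+b$ with $\Phi(0)=0$, integrating the first part of $b$ by parts in $s$ and bounding $\int_U g_s\,d\nu\le\nu(U)$, and bounding the second part by Cauchy--Schwarz together with the $L^2$-energy estimate $\int_0^t\!\int_U a(\nabla g_s,\nabla g_s)\,d\nu\,ds\le C(U)$ for the Dirichlet heat flow $g_s=1-P^U_s1$ (using $0\le g_s\le1$), one obtains $\Phi(t)\le C(U)\,e^{(\l_U+\th)^2t}$ with $C(U)$ independent of $x$, $t$, $\th$. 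Since $w\ge w(x)-r\ge d(x,A)-\ve-r$ on $B(x,r)$, this gives $\int_{B(x,r)}g_s^2\,d\nu\le C(U)\,e^{-2\th(d(x,A)-\ve-r)}e^{(\l_U+\th)^2t}$ for all $s\le t$. Feeding this into Proposition~\ref{MVE} at $x$, taking square roots, choosing $\th=(d(x,A)-\ve-r-\l_U t)/t$ and letting $\ve\downarrow0$, the exponent becomes at most $-\tfrac{(d(x,A)-r-\l_U t)^2}{2t}+\tfrac12\l_U^2t\le-\tfrac{d(x,A)^2}{2t}+2+\l_U d(x,A)+\tfrac12\l_U^2t$ (using $d(x,A)\,r/t=1$); since $d(x,A)\le\sup_{z\in U}d(z,A)<\infty$ and $t\le d(x,A)^2/2$, the last three terms, together with $C(U)$, are absorbed into the constant of \eqref{HITR}.

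The step I expect to be the main obstacle is the boundary flux $b(s)$: one has to both legitimise working on a smooth domain (the reduction to $\tilde U$, with constants uniform in $U$) and prove that $\int_0^t b(s)\,e^{(\l_U+\th)^2(t-s)}\,ds=O_U\!\big(e^{(\l_U+\th)^2t}\big)$; the identity $b(s)=2\partial_s\int_U g_s\,d\nu-2\int_U a(\b,\nabla g_s)\,d\nu$ combined with the global $L^2$-energy bound on $\nabla g_s$ is precisely what delivers a pure constant $C$ rather than a prefactor blowing up as $t\downarrow0$. (Alternatively one could work on a general $U$ with a cut-off, at the cost of such a prefactor, or propagate the $L^2$-bound through a finite chain of shells in the classical Davies--Gaffney manner.) Everything else is in place: that $p(s,\cdot,A)$ solves the heat equation in $U$; the applicability of Proposition~\ref{MVE} (the local volume-doubling and Poincar\'e inequalities); and the Lipschitz property and dual representation of the distance from Proposition~\ref{DUAL}.
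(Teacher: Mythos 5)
Your approach is correct but genuinely different from the paper's. The paper proves Proposition \ref{HITEST} by re-using the reflection machinery it sets up for Proposition \ref{HKUB}: it works on the doubled space $\tilde M$ (which has no boundary), takes as initial data $f=1+1_{U^+}-1_{U^-}$, and, crucially, it \emph{modifies $\nu$ and $\b$ on $A$} by multiplication by smooth cut-offs so as to obtain $\nu(A)\le 1$ and a \emph{global} sector bound $a(\b,\b)\le\l^2$; since $p(t,x,A)$ for $x\in U$ does not depend on the data on $A$, this costs nothing, but it makes the whole weighted $L^2$-estimate $\int_{\tilde M}(e^{\th w}u_t)^2d\tilde\nu\le e^{\rho t}\int_{\tilde M}f^2d\tilde\nu$ run exactly as in \eqref{GR}, with $\int_{\tilde M}f^2d\tilde\nu=2\nu(U)+\nu(A)<\infty$ and with no boundary terms at all. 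You instead work directly on $U$ with $g_s=p(s,\cdot,A)$, accept the boundary flux $b(s)=\int_{\partial U}\langle a\nabla g_s,n\rangle\,d\sigma$ in the Davies--Gaffney differential inequality, and neutralize it by the identity $b(s)=2\,\partial_s\!\int_U g_s\,d\nu-2\int_U a(\b,\nabla g_s)\,d\nu$ together with a global $L^2$-energy bound for the Dirichlet heat flow on $U$. This is a legitimate and perhaps more elementary route: the identity and the energy estimate are both sound (the latter following from the sector inequality on $U$, $\int_0^t\int_U a(\nabla g_s,\nabla g_s)\,d\nu\,ds\lesssim\nu(U)(1+\l_U^2t)$, and $t$ may be assumed $\le\tfrac12\operatorname{diam}(U)^2$), and the final optimization over $\th$ and $w$ gives the stated exponent. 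The price you pay is precisely what the paper's trick avoids: you need a smooth (or otherwise sufficiently regular) $\partial U$ to make sense of the flux and justify the integrations by parts, and therefore you need the preliminary approximation by inner domains $\tilde U$ with uniform control of the MVI and volume-doubling constants as $\tilde U$ varies. That uniformity is plausible because Proposition \ref{MVE} is local and uses only VD and PI on balls $B(z,2r)\subseteq\tilde U\subseteq U$, but it is not literally what the proposition states and should be spelled out if you want a complete proof; the paper's device of doubling $M$ along $\partial U$ and then modifying $(\nu,\b)$ on $A$ side-steps both the boundary regularity issue and the separate energy estimate in one stroke, while yours keeps everything on the original manifold at the cost of that extra bookkeeping.
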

\begin{proof}
We adapt the argument of the proof of Proposition \ref{HKUB}.
Since $\nu(D)<\infty$ and $p(t,x,A)\le1$, it will suffice to consider the case where $d(x,A)^2\ge2t$.
We modify the measure $\nu$ and the $1$-form $\b$ on $A$, if necessary, by multiplication by suitable $\cinf$ functions, 
so that $\nu(A)\le1$ and $a(\b,\b)(x)\le\l^2$ for all $x\in M$, for some $\l<\infty$.
This does not affect the value of $p(t,x,A)$ for $x\in D$.
Set $f=1+1_{D^+}-1_{D^-}$ and define, for $x\in\tilde M$,
$$
u_t(x)=\int_{\tilde M}\tilde p(t,x,y)f(y)\tilde\nu(dy).
$$
Then $p(t,x,A)=u_t(x^-)$ for $x\in D$.
Fix a locally Lipschitz function $w$ on $\tilde M$ such that $w=0$ on $A\cup D^+$ and $a(\nabla w,\nabla w)\le1$ almost everywhere.
Then, as we showed at \eqref{LTES}, for all $\th\in[0,\infty)$,
$$
\int_{\tilde M}(e^{\th w}u_t)^2d\tilde\nu\le e^{\rho t}\int_{\tilde M}f^2d\tilde\nu=e^{\rho t}(2\nu(D)+\nu(A))
$$
where 
$$
\rho=\sup_{x\in M}a(\b-\th\nabla w,\b-\th\nabla w)(x)\le(\l+\th)^2.
$$
By the same argument as that leading to \eqref{PMVE}, there is a constant $C=C(D)<\infty$ with the following property.
For all $x\in D$ and all $t\in(0,\infty)$, 
for all $r\in(0,\infty)$ such that $B(x,2r)\sse D$ and $r^2\le t/2$,
we have
$$
u_t(x^-)^2
\le C\fint_{t-r^2}^t\fint_{B^-}u_s^2d\tilde\nu ds
$$
where $B=\pi^{-1}(B(x,r))\cap D^-$.
Since $d(x,A)^2\ge2t$, we can take $r=t/d(x,A)$.
Note that $w\ge w(x^-)-r$ on $B$.
Then 
$$
p(t,x,A)^2=u_t(x^-)^2
\le C\fint_{t-r^2}^t\fint_{B}u_s^2d\tilde\nu ds
\le C\nu(B(x,r))^{-1}\exp\{-2\th(w(x^-)-r)+\rho t\}
$$
and, by optimizing over $\ve$, $\th$ and $w$, using Proposition \ref{DUAL}, we obtain 
$$
p(t,x,A)\le\frac C{\sqrt{\nu(B(x,r))}}\exp\left\{-\frac{d(x,A)^2}{2t}\right\}.
$$
\end{proof}

\section{Proofs of Theorems \ref{HTK}, \ref{VAR} and \ref{GEOD}}\label{PR}
\begin{proof}[Proof of Theorem \ref{HTK}]
The claim (b) follows directly from the Gaussian upper bound \eqref{PTXKY} 
and the asymptotic lower bound \eqref{UADE} for the volume of small balls, on letting $t\to\infty$.
It remains to show (a).
Set $D=M\sm A$ and assume that $D$ is relatively compact.
The asymptotic upper bound \eqref{HITS} for the hitting probability for $A$, follows from \eqref{HITR} and \eqref{UADE}.
It remains to show \eqref{HSU}.
For this, we adapt an argument of Hsu \cite{MR1089046} for the Riemannian case.
Fix a compact set $K\sse D$ and suppose that $x,y\in K$.
Consider the $\cL$-diffusion process $(B_t:t\in[0,\z))$ starting from $x$.
Set
$$
T=\inf\{t\in[0,\z):B_t\in A\}
$$
and note that $B_T\in\pd D$.
We use the identity
\begin{equation}\label{REPPK}
p(t,x,A,y)=\E_x(p(t-T,B_T,y)1_{\{T<t\}}).
\end{equation}
Note that $\PP_x(T\le t)=p(t,x,A)$ and the estimate \eqref{HITR} applies.
We estimate $p(t,z,y)$ for $z\in\pd D$ using \eqref{DOOB}. 
There exists a relatively compact open set $U$ in $M$ containing the closure of $D$.
Then
$$
p(t,z,y)=p_U(t,z,y)+\int_{[0,t)\times\pd D}p_U(t-s,z',y)\mu_z(ds,dz')
$$
where 
$$
\mu_z([0,t]\times\pd D)\le C(D,U)t,\q C(D,U)<\infty.
$$
For all $z\in\pd D$,
$$
p_U(t,z,y)\le\frac{C_U(z,y)}{
\sqrt{\nu(B(z,r(t,z)))}\sqrt{\nu(B(y,r(t,z)))}
}\exp\left\{-\frac{d(z,y)^2}{2t}+\frac{\l_U^2t}2\right\}
$$
where
$$
r(t,z)=\min\left\{\frac t{d(z,y)},\sqrt{\frac t4},\frac{d(z,\pd U)}4,\frac{d(y,\pd U)}4\right\},\q
\l^2_U=\sup_{z\in U}a(\b,\b)(z)<\infty.
$$
Now 
$$
\inf_{z\in\pd D}d(z,y)=d(y,A),\q \sup_{z\in\pd D}d(z,y)\le C(D)<\infty
$$
and, for $r>0$ sufficiently small
$$
\inf_{z\in\bar D}\nu(B(z,r))\ge r^{N(\bar D)+1}.
$$
For $t>0$ sufficiently small, we have $r(t,z)=t/d(z,y)$ for all $z\in\pd D$ and all $y\in K$, and then
$$
\sqrt{\nu(B(z,r(t,z)))}\sqrt{\nu(B(y,r(t,z)))}\ge\left(\frac t{C(D,y)}\right)^{N(\bar D)+1}.
$$
Hence, for $t>0$ sufficiently small, for all $z\in\pd D$ and $y\in K$,
$$
p_U(t,z,y)\le
\frac{C(K,D,U)}{t^{N(\bar D)+1}}\exp\left\{-\frac{d(y,A)^2}{2t}+\frac{\l_U^2t}2\right\}
$$
where
$$
C(K,D,U)=\sup_{y\in K,z\in\pd D}C_U(z,y)\times C(D,y)^{N(\bar D)+1}.
$$
This estimate, along with \eqref{HITR}, allows us to short-cut some steps in Hsu's argument.
On substituting the estimates into \eqref{REPPK} and using the elementary \cite[Lemma 2.1]{MR1089046}, 
we conclude as claimed that
$$
\limsup_{t\to0}t\log p(t,x,A,y)\le-(d(x,A)+d(y,A))^2/2.
$$
\end{proof}

\def\j{
We remark that the Gaussian upper bound \eqref{PTXKY}, together with the volume asymptotics for small balls \eqref{ADE},
implies the following stronger forms of \eqref{HITS} and \eqref{SUB}.
In the case where $\cL$ has the form \eqref{CFO} and satisfies \eqref{SECT}, we have, for any closed set $A$,
$$
\limsup_{t\to0}\frac{t\log p(t,x,A,y)+d(x,A,y)^2/2}{t\log(1/t)}\le\frac{N(x)+N(y)}2
$$
so, in particular,
\begin{equation}\label{SSUB}
\limsup_{t\to0}\frac{t\log p(t,x,y)+d(x,y)^2/2}{t\log(1/t)}\le\frac{N(x)+N(y)}2.
\end{equation}
Moreover, whenever $\cL$ has the form \eqref{CFO} and $A$ is the complement in $M$ of a relatively compact open set,
$$
\limsup_{t\to0}\frac{t\log p(t,x,A)+d(x,A)^2/2}{t\log(1/t)}\le\frac{N(x)}2.
$$
However, the dimensions appearing on the right-hand side are not in general optimal, as can be seen by comparing \eqref{SSUB} with the small-time equivalent
for $p(t,x,y)$ obtained in \cite[Theorem 1.2]{BMN} for $(x,y)$ outside the
sub-Riemannian cut-locus, which gives in that case
$$
\limsup_{t\to0}\frac{t\log p(t,x,y)+d(x,y)^2/2}{t\log(1/t)}\le\frac d2.
$$
}

\begin{proof}[Proof of Theorem \ref{VAR}]
First we will show the lower bound 
\begin{equation}\label{LBVAR}
\liminf_{t\to0}t\log p(t,x,y)\ge-d(x,y)^2/2
\end{equation}
locally uniformly in $x$ and $y$.
Given $\ve>0$, there exists a simple path $\g\in\oxy$, with driving path $\xi$ say, such that $\sqrt{I(\g)}\le d(x,y)+\ve$.
We can and do parametrize $\g$ so that $a(\xi_t,\xi_t)=I(\g)$ for almost all $t\in[0,1]$.
Fix $\d>0$ and consider the open set
$$
U=\{z\in M:d(z,\g_t)<\d\text{ for some }t\in[0,1]\}.
$$
We can and do choose $\d$ so that $U$ is compactly contained in the domain of a chart.
Choose $n\ge1$ such that
$$
\frac{d(x,y)+\ve}n\le\d
$$
and fix $\eta\in(0,\d/4)$.
For $k=0,1,\dots,n$, set $t_k=k/n$ and $x_k=\g_{t_k}$ and suppose that $y_k\in B(x_k,\eta)$.
Then, for $k=1,\dots,n$, 
\begin{align*}
d(y_{k-1},y_k)<d(x_{k-1},x_k)+2\eta=\sqrt{I(\g)}/n+2\eta&\le(d(x,y)+\ve)/n+2\eta,\\
d(y_{k-1},M\sm U)+d(y_k,M\sm U)
\ge2(\d-\eta)&>(d(x,y)+\ve)/n+2\eta.
\end{align*}
We can identify the chart with a subset of $\R^d$ and choose extensions 
$\tilde X_0,\tilde X_1,\dots,\tilde X_m$ to $\R^d$
of the restrictions of $X_0,X_1,\dots,X_m$ to $U$ 
such that the extended vector fields are all bounded with bounded derivatives of all orders,
such that 
$\tilde X_1,\dots,\tilde X_m$ is a sub-Riemannian structure on $\R^d$,
and such that
$\tilde X_0=\chi X_0$ for some $\cinf$ function $\chi$ vanishing outside the chart.
Then, by L\'eandre's lower bound \cite[Theorem II.3]{MR904825} in $\R^d$, 
for $k=1,\dots,n$, 
uniformly in $y_{k-1}$ and $y_k$,
\begin{equation*}
\liminf_{t\to0}t\log\tilde p(t,y_{k-1},y_k)\ge
-d(y_{k-1},y_k)^2/2.
\end{equation*}
On the other hand, by Theorem \ref{HTK},
for $k=1,\dots,n$, uniformly in $y_{k-1}$ and $y_k$,
\begin{equation*}
\limsup_{t\to0}t\log\tilde p(t,y_{k-1},\R^d\sm U,y_k)
\le-d(y_{k-1},\R^d\sm U,y_k)^2/2
\le
-(d(y_{k-1},M\sm U)+d(y_k,M\sm U))^2/2
\end{equation*}
Hence, by our choice of $n$ and $\eta$,
uniformly in $y_{k-1}$ and $y_k$,
\begin{equation*}
\liminf_{t\to0}t\log p_U(t,y_{k-1},y_k)
=\liminf_{t\to0}t\log\tilde p_U(t,y_{k-1},y_k)
\ge-(d(x_{k-1},x_k)+2\eta)^2/2.
\end{equation*}
Now, by a standard chaining procedure, we obtain, uniformly in $y_0$ and $y_n$,
\begin{equation*}
\liminf_{t\to0}t\log p_U(t,y_0,y_n)
\ge-\frac n2\sum_{k=1}^n(d(x_{k-1},x_k)+2\eta)^2
\ge-(d(x,y)+\ve+2\eta n)^2/2.
\end{equation*}
This implies \eqref{LBVAR}, since $p_U(t,x,y)\le p(t,x,y)$ and $\ve$ and $\eta$ may be chosen arbitrarily small.

It remains to show the upper bound 
\begin{equation}\label{UBVAR}
\limsup_{t\to0}t\log p(t,x,y)\le-d(x,y)^2/2
\end{equation}
locally uniformly in $x$ and $y$.
In the case where $\cL$ satisfies \eqref{SECT}, this follows from Theorem \ref{HTK} by taking $A=M$.
On the other hand, given $\ve>0$ and a compact set $F$ in $S$,
there is a relatively compact open set $D$ in $M$ such that, for $A=M\sm D$ and all $(x,y)\in F$,
$$
d(x,y)-\ve\le d(x,A)+d(y,A).
$$
Now the restriction of $\cL$ to $D$ satisfies \eqref{SECT}, so
\begin{equation}\label{UBU}
\limsup_{t\to0}t\log p_D(t,x,y)\le-d_D(x,y)^2/2\le-d(x,y)^2/2
\end{equation}
uniformly in $(x,y)\in F$, while, by Theorem \ref{HTK},
\begin{equation}\label{UBK}
\limsup_{t\to0}t\log p(t,x,A,y)\le-(d(x,A)+d(y,A))^2/2
\end{equation}
also uniformly in $(x,y)\in F$.
Since $p(t,x,y)=p_D(t,x,y)+p(t,x,A,y)$ and $\ve$ is arbitrary, \eqref{UBVAR} follows from \eqref{UBU} and \eqref{UBK}.
\end{proof}

In the following proof, we introduce an auxiliary real Brownian bridge, from $0$ to $1$ of speed $\ve$.
This is known to converge weakly to a uniform drift as $\ve\to0$.
So this auxiliary process provides a new coordinate which acts as a surrogate for time,
thereby allowing us to lift the small-time estimate for the heat kernel to a weak convergence result for the associated bridge.

\begin{proof}[Proof of Theorem \ref{GEOD}]
Consider first the case where $\cL$ satisfies \eqref{SECT} and $\g$ is strongly minimal.
We will show, for all $\d>0$, for 
$$
\G_t(\d)=\{\o_t:\o\in\oxy,\, I(\o)<d(x,y)^2+\d\}
$$
and for
$$
r=\d^{1/4}(d(x,y)^2+\d)^{1/2}
$$
that we have
\begin{equation}\label{LDEV}
\limsup_{\ve\to0}\ve\log\mexy(\{\o\in\oxy:d(\o_t,\G_t(\d))\ge r\text{ for some }t\in[0,1]\})\le-\d/2.
\end{equation}
Then, since $\g$ is the unique minimal path in $\oxy$ and $\g$ is strongly minimal, for all $\rho>0$,
there exists $\d>0$ such that, for all $\o\in\oxy$, we have $I(\o)\ge d(x,y)^2+\d$ whenever $d(\o_t,\g_t)\ge\rho$ for some $t\in[0,1]$. 
Hence $d(z,\g_t)<\rho$ for all $z\in\G_t(\d)$ and all $t\in[0,1]$.
Then it follows from \eqref{LDEV} that, as $\ve\to0$,
$$
\mexy(\{\o\in\oxy:d(\o_t,\g_t)<r+\rho\text{ for all }t\in[0,1]\})\to1
$$
showing that $\mexy\to\dg$ weakly on $\oxy$.

Consider the operator $\tilde\cL$ and measure $\tilde\nu$ on $\tilde M=M\times\R$ given by 
$$
\tilde\cL=\cL+\frac12\left(\frac\pd{\pd\t}\right)^2,\q\tilde\nu(dx,d\t)=\nu(dx)d\t
$$ 
where $\t$ denotes the coordinate in $\R$.
Then
\begin{equation*}
\tilde\cL f=\tfrac12\widetilde{\dvv}(\tilde a\nabla f)+\tilde a(\tilde\b,\nabla f)
\end{equation*}
where $\widetilde\dvv$ is the divergence associated to $\tilde\nu$ and where
$$
\tilde a(x,\t)=a(x)+\frac\pd{\pd\t}\otimes\frac\pd{\pd\t},\q
\left\<\tilde\b(x,\t),v\pm\frac\pd{\pd\t}\right\>=\<\b(x),v\>,\q
v\in T_xM.
$$
Moreover, $\tilde a$ has a sub-Riemannian structure and 
$$
\tilde a(\tilde\b,\tilde\b)(x,\t)=a(\b,\b)(x)\le\l^2
$$
Write $\O^{0,1}(\R)$ for the set of continuous paths $\s:[0,1]\to\R$ such that $\s_0=0$ and $\s_1=1$.
For $\s\in\O^{0,1}(\R)$, define 
$$
I(\s)=
\begin{cases}
\displaystyle{\int_0^1|\dot\s_t|^2dt},&\text{if $\s$ is absolutely continuous},\\
\infty,&\text{otherwise}.
\end{cases}
$$
Set $\tilde x=(x,0)$ and $\tilde y=(y,1)$, and define
$$
\tilde A=\tilde M\sm\tilde D,\q \tilde D=\{(\g_t,\s_t):(\g,\s)\in\tilde\G(\d),\,t\in[0,1]\}
$$
where
$$
\tilde\G(\d)=\left\{(\g,\s)\in\oxy\times \O^{0,1}(\R):I(\g)+I(\s)<d(x,y)^2+1+\d\right\}.
$$
Then $\tilde A$ is closed in $\tilde M$.
Write $\b^{0,1}_\ve$ for the law on $\O^{0,1}(\R)$ of a Brownian bridge from $0$ to $1$ of speed $\ve$.
Then, with obvious notation,
$$
\tilde p(t,\tilde x,\tilde y)=p(t,x,y)\frac1{\sqrt{2\pi}}e^{-1/(2t)},\q\tilde\mu^{\tilde x,\tilde y}_\ve(d\o,d\t)=\mexy(d\o)\b_\ve^{0,1}(d\t).
$$
By Theorem \ref{HTK}, we have
$$
\limsup_{t\to0}t\log\tilde p(t,\tilde x,\tilde A,\tilde y)\le-\tilde d(\tilde x,\tilde A,\tilde y)^2/2=-(d(x,y)^2+1+\d)/2
$$
so
\begin{align}\notag
&\limsup_{\ve\to0}\ve\log\tilde\mu_\ve^{\tilde x,\tilde y}(\{(\o,\t):(\o_t,\t_t)\in\tilde A\text{ for some }t\in[0,1]\})\\
\label{LD1}
&\q\q\q\q\le\limsup_{\ve\to0}\ve\log\tilde p(\ve,\tilde x,\tilde A,\tilde y)-\liminf_{\ve\to0}\ve\log\tilde p(\ve,\tilde x,\tilde y)\le-\d/2
\end{align}
where we have used the lower bound from Theorem \ref{VAR}.
By standard estimates, we also have
\begin{equation}\label{LD2}
\lim_{\ve\to0}\ve\log\b_\ve^{0,1}(\{\t:|\t_t-t|\ge\sqrt\d/2\text{ for some }t\in[0,1]\})=-\d/2.
\end{equation}
Suppose then that $\o\in\oxy$ and $\t\in\O^{0,1}(\R)$ satisfy $(\o_t,\t_t)\in\tilde D$ and $|\t_t-t|<\sqrt\d/2$ for all $t\in[0,1]$.
Then, for each $t\in[0,1]$, there exist $s\in[0,1]$ and $\g\in\oxy$ and $\s\in\O^{0,1}(\R)$ such that 
$$
\o_t=\g_s,\q \t_t=\s_s,\q I(\g)<d(x,y)^2+\d,\q I(\s)<1+\d.
$$
Then $|\s_s-s|\le\sqrt\d/2$ so $|t-s|\le\sqrt\d$ and so 
$$
d(\o_t,\G_t(\d))^2\le d(\o_t,\g_t)^2=d(\g_s,\g_t)^2\le|t-s|I(\g)\le\d^{1/2}(d(x,y)^2+\d).
$$
The estimates \eqref{LD1} and \eqref{LD2} thus imply \eqref{LDEV}.

We turn to the case where $d(x,y)<d(x,\infty)+d(y,\infty)$.
Then there exists a relatively compact open set $D$ in $M$ such that, for $A=M\sm D$, 
$$
d(x,y)<d(x,A)+d(y,A).
$$
Then, by Theorem \ref{HTK},
\begin{equation}\label{KKI}
\limsup_{\ve\to0}\ve\log p(\ve,x,A,y)\le-(d(x,A)+d(y,A))^2/2<-d(x,y)^2/2
\end{equation}
while, by Theorem \ref{VAR},
\begin{equation}\label{KKJ}
\liminf_{\ve\to0}\ve\log p(\ve,x,y)\ge-d(x,y)^2/2.
\end{equation}
Set
$$
\oxy_D=\{\o\in\oxy:\o_t\in D\text{ for all }t\in[0,1]\}.
$$
Then $\g$ is the unique minimal path in $\oxy_D$, $\g$ is strongly minimal in $\oxy_D$, and
\begin{equation}\label{CUP}
p(\ve,x,y)1_{\oxy_D}(\o)\mexy(d\o)=p_D(\ve,x,y)\mu_\ve^{x,y,D}(d\o).
\end{equation}
Consider the limit $\ve\to0$.
Since the restriction of $\cL$ to $D$ satisfies \eqref{SECT}, 
by the first part of the proof, we have $\mu_\ve^{x,y,D}\to\dg$ weakly on $\oxy_D$.
Since
$$
p(\ve,x,y)=p_D(\ve,x,y)+p(\ve,x,A,y)
$$
it follows from \eqref{KKI} and \eqref{KKJ} that $p_D(\ve,x,y)/p(\ve,x,y)\to1$.
Hence, on letting $\ve\to0$ in \eqref{CUP}, we see that also $\mexy\to\dg$ weakly on $\oxy$.
\end{proof}  

\def\j{
\section{Strong uniform parabolicity}
Set
$$
\cE_0(f,g)=\int_Ma(\nabla f,\nabla g)d\nu,\q \cE_1(f,g)=-\int_Ma(\b,\nabla f)gd\nu
$$
and consider the Dirichlet form 
$$
\cE(f,g)=-\int_M\cL fgd\nu=\cE_0(f,g)+\cE_1(f,g).
$$
We will check, following Sturm \cite[Section 2.2]{MR1355744}, that $\cE_0$ satisfies the following strong uniform parabolicity condition:
there is a constant $\k\in[1,\infty)$ such that, for all $p\in\R$,
for all $\cinf$ functions $u$ and $\phi$, such that $u$ is bounded and uniformly positive and $\phi$ has compact support,
\begin{equation}\label{SUP}
-\frac{p-1}2\cE_0(u,u^{p-1}\phi^2)
\le\k\int_Ma(\nabla\phi,\nabla\phi)u^pd\nu
-\frac1\k\left(1-\frac1p\right)^2
\int_Ma(\nabla(u^{p/2}),\nabla(u^{p/2}))\phi^2d\nu.
\end{equation}
ForWe have
\begin{align*}
&-\frac{p-1}2\cE_0(u,u^{p-1}\phi^2)\\
&\q\q=-\frac{p-1}2\int_Ma(\nabla u,(p-1)u^{p-2}\phi^2\nabla u+2u^{p-1}\phi\nabla\phi)d\nu\\
&\q\q=-2\left(1-\frac1p\right)^2\int_Ma(\phi\nabla(u^{p/2}),\phi\nabla(u^{p/2}))d\nu
       -2\left(1-\frac1p\right)\int_Ma(\phi\nabla(u^{p/2}),u^{p/2}\nabla\phi)d\nu\\
&\q\q\le\k\int_Ma(\nabla\phi,\nabla\phi)u^pd\nu
	-\left(2-\frac1\k\right)\left(1-\frac1p\right)^2\int_Ma(\nabla(u^{p/2}),\nabla(u^{p/2}))\phi^2d\nu
\end{align*}
so \eqref{SUP} holds for $\k=1$.
On the other hand, we have
\begin{align*}
&-\frac{p-1}2\cE_1(u,u^{p-1}\phi^2)\\
&\q\q=\frac{p-1}2\int_Ma(\b,\nabla u)u^{p-1}\phi^2d\nu\\
&\q\q=-\left(1-\frac1p\right)\int_Ma(\b,\phi\nabla(u^{p/2}))u^{p/2}\phi d\nu\\
&\q\q\le\frac12\left(1-\frac1p\right)\left(\k\|a(\b,\b)\|_\infty\int_Mu^p\phi^2d\nu
	+\frac1\k\int_Ma(\nabla(u^{p/2}),\nabla(u^{p/2}))\phi^2d\nu\right).
\end{align*}

\section{Applying Lierl--Coste more generally}
Let $a$ be a measurable 
non-negative-definite
symmetric 
bilinear form on $T^*M$, 
let $\b$ be a measurable $1$-form on $M$ and let $\nu$ be a measure on $M$.
Our aim is to study the heat flow, and the diffusion process, associated to the differential operator $\cL$ on $M$, 
given formally by
$$
\cL f=\dvv(a\nabla f)+a(\b,\nabla f)
$$
where the divergence is defined with respect to $\nu$.
We will do this by requiring uniform comparison bounds between our coefficients $a,\b$ and $\nu$ and some more regular objects.

Let $a_0$ be a $\cinf$ 
non-negative-definite
symmetric 
bilinear form on $T^*M$ such that $a_0$ has a $\cinf$ sub-Riemannian structure.
Let $\nu_0$ be a measure on $M$ such that $\nu_0$ has a $\cinf$ density with respect to Lebesgue measure in each coordinate chart.
We assume that there are constants $\l\in[1,\infty)$ and $\L\in[0,\infty)$ such that, for all $x\in M$,
\begin{equation}\label{UBBL}
\l^{-1}a_0(x)\le a(x)\le\l a_0(x),\q \l^{-1}\nu_0(dx)\le\nu(dx)\le\l\nu_0(dx)
\end{equation}
and
\begin{equation}\label{UBBK}
a(\b,\b)(x)\le\L^2.
\end{equation}
We seek to apply the general theory in \cite{1205.6493} to 
$$
\cE(f,g)=-\int_M\cL fgd\nu=\int_Ma(\nabla f,\nabla g)d\nu-\int_Ma(\b,\nabla f)gd\nu.
$$
The main example in \cite{1205.6493} has $M=\R^d$ and takes $\nu$ to be Lebesgue measure, and assumes that the diffusivity $a$ is uniformly bounded and uniformly positive-definite.
Our purpose here is to check that one can apply the estimates of \cite{1205.6493} more generally.

We take as model form
$$
\cE_0(f,g)=\int_Ma_0(\nabla f,\nabla g)d\nu_0
$$
with domain 
$$
\cF=\{f\in L^2(\nu_0):\|f\|_\cF<\infty\},\q\|f\|^2_\cF=\cE_0(f,f)+\int_Mf^2d\nu_0.
$$
The conditions in \cite{1205.6493} refer to the forms
$$
\cE^\sym(f,g)=\frac12(\cE(f,g)+\cE(g,f)),\q
\cE^\skew(f,g)=\frac12(\cE(f,g)-\cE(g,f))
$$
and
$$
\cE^s(f,g)=\cE^\sym(f,g)-\cE^\sym(fg,1).
$$
In our case, we have
\begin{align*}
\cE^\sym(f,g)&=\int_Ma(\nabla f,\nabla g)d\nu+\frac12\int_Ma(\b,g\nabla f+f\nabla g)d\nu,\\
\cE^\skew(f,g)&=\frac12\int_Ma(\b,g\nabla f-f\nabla g)d\nu,\\
\cE^s(f,g)&=\int_Ma(\nabla f,\nabla f)d\nu.
\end{align*} 
We use the uniform bounds \eqref{UBBL},\eqref{UBBK} and Cauchy--Schwarz in verifying the following
\begin{equation}\label{HOCD}
\text{$\cE$ is local and has domain $\cF$}.
\end{equation}
and
\begin{equation}\label{HOCE}
|\cE(f,g)|\le\l\|f\|_\cF\|g\|_\cF,\q
|\cE^\sym(fg,1)|\le\l\|f\|_\cF\|g\|_\cF.
\end{equation}
It is straightforward to check that
\begin{equation}\label{HOCF}
\l^{-2}\cE_0(f,f)\le\cE^s(f,f)\le \l^2\cE_0(f,f)
\end{equation}
and\footnote{See \cite[Definition 2.5]{1205.6493}.} 
\begin{equation}\label{HOCG}
\text{$\cE^\skew$ is a chain rule skew form with respect to $\cF$}.
\end{equation}
In properties \eqref{HOCD},\eqref{HOCE},\eqref{HOCF} and \eqref{HOCG}, we have checked that $\cE$ satisfies \cite[Assumption 0]{1205.6493} with $C_*=\l$ and $C=\l$.

Note that
\begin{align*}
\cE^\sym(f^2,1)&=\int_Ma(\b,\nabla f)fd\nu,\\
\cE^\skew(f,fg^2)&=-\int_Ma(\b,\nabla g)f^2gd\nu
\end{align*}
so, by \eqref{UBBC} and Cauchy--Schwarz,
\begin{equation}\label{HOCH}
|\cE^\sym(f^2,1)|
\le2\l\left(\int_Ma(\nabla f,\nabla f)d\nu\right)^{1/2}\left(\int_Mf^2d\nu\right)^{1/2}
\end{equation}
and
\begin{equation}\label{HOCI}
|\cE^\skew(f,fg^2)|\le2\l
\left(\int_Ma(\nabla g,\nabla g)f^2d\nu\right)^{1/2}
\left(\int_Mf^2g^2d\nu\right)^{1/2}.
\end{equation}
In properties \eqref{HOCF},\eqref{HOCH} and \eqref{HOCI}, we have checked that $\cE$ satisfies \cite[Assumption 1]{1205.6493}, with
$$
C_1=1,\q C_2=\l^2,\q C_3=0,\q C_4=0,\q C_5=\l^2.
$$

\section{Old text}
There is a family of probability measures on the set $\oxy=\{\o\in\O:\o_0=x,\o_1=y\}$ which is naturally associated to the operator $\cL$.
Fix $\ve>0$ and $x\in M$. 
There exists a diffusion process starting from $x$ and having generator $\ve\cL$.
Since, in general, the coefficients of $\cL$ may be unbounded and we make no assumption of completeness for $M$, this diffusion may explode with positive probability, that is to say it may leave all compact sets in finite time.
We will write $\mex$ for the unique sub-probability measure on $\O$ which is the law of this diffusion restricted to paths which do not explode by time $1$.
Under our assumptions, there is a unique family of probability measures $(\mexy:y\in M)$ on $\O$ which is weakly continuous in $y$, 
with $\mexy$ supported on $\oxy$ for all $y$, and such that
$$
\mex(d\o)=\int_M\mexy(d\o)p(\ve,x,dy)
$$
where $p(\ve,x,.)$ is the (sub-)law of $\o_1$ under $\mex$.
More explicitly, the finite-dimensional distributions of each measure $\mexy$ may be written as follows.
Choose a positive $\cinf$ measure $\nu$ on $M$.
It does not have to be the same measure as in the preceding paragraph.
There is a positive $\cinf$ function $p$ on $(0,\infty)\times M\times M$ such that
$$
p(\ve,x,dy)=p(\ve,x,y)\nu(dy).
$$
This function $p$ is the Dirichlet heat kernel for $\cL$ with respect to $\nu$.
Then, for all $k\in \N$, all $t_1,\dots,t_k\in(0,1)$ with $t_1<t_2<\dots<t_k$ and all $x_1,\ldots,x_k\in M$, we have
\begin{align*}
&\mexy(\{\o:\o_{t_1}\in dx_1,\ldots,\o_{t_k}\in dx_k\})\\
&\q\q=\frac{p(\ve t_1,x,x_1)p(\ve(t_2-t_1),x_1,x_2)\ldots p(\ve(1-t_k),x_k,y)}{p(\ve,x,y)}
\nu(dx_1)\dots\nu(dx_k).
\end{align*}
It is straightforward to see that these finite-dimensional distributions are consistent and do not depend on the choice of $\nu$.
}

\bibliography{q}

\end{document}